\documentclass[12pt]{article}
\input epsf.tex


\usepackage{graphicx}
\usepackage{amsmath,amsthm,amsfonts,amscd,amssymb,comment,eucal,latexsym,mathrsfs}
\usepackage{stmaryrd}
\usepackage[all]{xy}

\usepackage{epsfig}

\usepackage[all]{xy}
\xyoption{poly}
\usepackage{fancyhdr}
\usepackage{wrapfig}
\usepackage{epsfig}

\usepackage[pdftex]{hyperref}



\theoremstyle{plain}
\newtheorem{thm}{Theorem}[section]
\newtheorem{prop}[thm]{Proposition}
\newtheorem{lem}[thm]{Lemma}
\newtheorem{cor}[thm]{Corollary}

\theoremstyle{definition}
\newtheorem{defn}{Definition}
\theoremstyle{remark}
\newtheorem{remark}{Remark}
\newtheorem{example}{Example}

\newtheorem{problem}{Problem}

\newtheorem{notation}{Notation}



\topmargin 15pt
\advance \topmargin by -\headheight
\advance \topmargin by -\headsep
\textheight 8.6in
\oddsidemargin 0pt
\evensidemargin \oddsidemargin
\marginparwidth 0.5in
\textwidth 6.5in

  \def\C{{\mathbb{C}}}           \def\N{{\mathbb{N}}}    \def\R{{\mathbb{R}}}        \def\Z{{\mathbb{Z}}}






 \def\cB{{\mathcal{B}}}                \def\cR{{\mathcal{R}}}        




      \def\hG{{\widehat{G}}}                   





     \def\tF{{\tilde{F}}}         \def\tO{{\tilde{O}}}           

     \def\tf{{\tilde{f}}}                    







\newcommand{\G}{\Gamma}
\newcommand{\Ga}{\Gamma}
\newcommand{\La}{\Lambda}
\newcommand{\Om}{\Omega}
\newcommand{\Si}{\Sigma}
\newcommand{\eps}{\epsilon}

\renewcommand\a{\alpha}
\renewcommand\b{\beta}
\renewcommand\d{\delta}
\newcommand\g{\gamma}

\newcommand\om{\omega}
\newcommand\s{\sigma}
\renewcommand\t{\theta}



\newcommand\Aut{\operatorname{Aut}}

\newcommand\dom{\operatorname{dom}}

\newcommand\Haar{\operatorname{Haar}}

\newcommand\injrad{\operatorname{injrad}}
\newcommand\Ind{\operatorname{Ind}}

\newcommand\Ker{\operatorname{Ker}}

\newcommand\rng{\operatorname{rng}}

\newcommand\sym{\operatorname{sym}}

\newcommand\vol{\operatorname{vol}}

\def\cc{{\curvearrowright}}

  \linespread{1.4}

\begin{document}
\title{Locally compact sofic groups}
\author{Lewis Bowen\footnote{supported in part by NSF grant DMS-1900386}, Peter Burton\\ University of Texas at Austin}
\maketitle

\begin{abstract}
We introduce the notion of soficity for locally compact groups and list a number of open problems.
\end{abstract}

\noindent
{\bf Keywords}: sofic groups, metric approximation in group theory\\
{\bf MSC}:37A35\\

\noindent
\tableofcontents

\section{Introduction}

A countable discrete group $\G$ is {\bf sofic} if there exist maps $\s_i:\G \to \sym(V_i)$ (where $V_i$ is a finite set and $\sym(V_i)$ is its permutation group) satisfying 
\begin{eqnarray*}
1 &=&\lim_{i\to\infty} |V_i|^{-1} \{v \in V_i:~ \s_i(g)\s_i(h)v = \s_i(gh)v\} \quad \forall g,h \in \G\\
1 &=&\lim_{i\to\infty} |V_i|^{-1} \{v \in V_i:~ \s_i(g)v \ne v\} \quad \forall g \in \G \setminus \{1_\G\}.
\end{eqnarray*}
The first condition ensures that the maps behave asymptotically like homomorphisms and the second condition ensures that, asymptotically, every nontrivial element of $\G$ behaves like fixed point-free map on $V_i$. In this sense, we can think of $\s_i$ as providing a kind of approximation for the left-translation action of $\G$ on itself. 

Sofic groups were defined implicitly by M. Gromov in \cite{MR1694588} where he proved they satisfy Gottschalk's surjunctivity conjecture. Benjy Weiss made the subject more accessible by simplifying the proof of Gromov's result in \cite{weiss-2000}  and giving sofic groups their name, which is derived from the Hebrew word {\it sofi} meaning finite. For an introduction to sofic groups, see \cite{pestov-sofic-survey, pestov-kwiatkowska, capraro-lupini}. At the time of this writing, it is an open problem whether all discrete countable groups are sofic.


Locally compact sofic groups and their entropy theory were introduced by the first author and Sukhpreet Singh in Singh's unpublished 2016 thesis (available upon request to the first author). In this note, we give a new approach to locally compact sofic groups via partial actions and charts. Our main results are informally summarized as follows.
\begin{itemize}
\item Theorem \ref{T:unimodular}. Every sofic group is unimodular.
\item Theorem \ref{thm:amenable}. Every unimodular lcsc amenable group is sofic.
\item Theorem \ref{thm:injrad1}. A sequence of local $G$-spaces is a sofic approximation if and only if the essential injectivity radius of the sequence is infinite.
\item Theorem  \ref{T:discretegroup}. The new definition of sofic given in this paper generalizes the previous definitions for discrete countable groups.
\item Theorem \ref{T:soficlattice}. If $G$ admits a sofic lattice subgroup then $G$ is sofic. 
\item Corollary \ref{C:lattices}. The following groups are sofic: semi-simple Lie groups (e.g. $\rm{SL}(n,\R), \rm{SO}(n,1)$ etc), the automorphism group of a regular tree. 
\item Proposition \ref{P:open}. If $G$ is sofic and $H\le G$ is an open subgroup then $H$ is sofic.
\end{itemize}

This paper is organized as follows. Fix a  locally compact second countable group $G$. In \S 2 we introduce local $G$-spaces as  topological spaces with a partial homogeneous action of $G$. We derive metric and measure-theoretic properties of these spaces. In \S 3 we define sofic approximations to $G$ as sequences of local $G$-spaces which, in a sense, approximate the action of $G$ on itself by right-translations. We also prove the main results. The last section gives a series of open problems. 


\section{Local $G$-spaces and partial actions}

\subsection{Local $G$-spaces} \label{subsec.localgspace}

We use the abbreviation lcsc to mean locally compact second countable. Let $G$ be an lcsc group.

\begin{defn}\label{D:partial}
A {\bf partial right-action} of $G$ on a Hausdorff space $M$ is a continuous map $\a:\dom(\a) \to M$ where $\dom(\a) \subset M \times G$ is open.  We require the following axioms hold for all $p\in M$.
\begin{enumerate}
\item[Axiom 1.] $(p,1_G) \in \dom(\a)$ and $\a(p,1_G) = p$. \label{D:partial-identity}
\item[Axiom 2.] If $(p,g)\in \dom(\a)$ then $(\a(p,g),g^{-1}) \in \dom(\a)$ and $\a( \a(p,g), g^{-1}) = p$.  \label{D:partial-inverse}
\item[Axiom 3.] If $(p,g), (\a(p,g),h), (p,gh) \in \dom(\a)$ then $\a(p,gh)=\a(\a(p,g),h)$. \label{D:partial-mult}
\end{enumerate}
A partial action $\a$ is {\bf homogeneous} if in addition it satisfies the following.
\begin{enumerate}
\item[Axiom 4.] \label{D:partial-homeo} For every $p\in M$ there is an open neighborhood $O_p$ of  $1_G$ in $G$ such that $\{p\} \times O_p \subset \dom(\a)$ and the  
restriction of $\a(p,\cdot)$ to $\{p\} \times O_p$ is a homeomorphism onto an open neighborhood of $p$ in $M$. 
\end{enumerate}

\end{defn}

\begin{defn}
A {\bf local $G$-space} is a pair $(M,\a)$ where $M$ is an lcsc space and $\a$ is a partial homogeneous right-action. 
\end{defn}

\begin{notation}
We will usually denote a local $G$-space by $M$ (or $V$), leaving the action $\a$ implicit. To simplify notation, we write $p.g= \a(p,g)$. If $K \subset M$, we will also write $K.g = \{ \a(k,g):~k \in K\}$. In particular, $K.g$ is well-defined if and only if $K\times \{g\}$ is in the domain of the action $\a$. Similarly, we write $p.O = \{\a(p,g):~g\in O\}$ if $\{p\}\times O \subset \dom(\a)$. 
\end{notation}

\begin{remark}
By Axiom 3, $p.g_1.g_2=p.g_1g_2$ when both sides are defined. However this does not imply that $p.g_1.g_2.g_3 = p.g_1g_2g_3$ even when both sides are defined. See example \ref{E:branched}.
\end{remark}

\begin{lem}\label{L:injective}
Let $M$ be a local $G$-space. Let $g \in G$. Then $\a(\cdot, g)$ is injective (where it is defined).
\end{lem}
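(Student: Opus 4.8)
The plan is to exploit Axiom 2, which supplies an inverse to the partial translation by $g$ via the element $g^{-1}$. Suppose that $(p,g)$ and $(q,g)$ both lie in $\dom(\a)$ and that $\a(p,g) = \a(q,g)$; write $r$ for this common value. The goal is to show $p = q$.

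First I would apply Axiom 2 to the pair $(p,g)$: since $(p,g) \in \dom(\a)$, the axiom guarantees that $(r, g^{-1}) = (\a(p,g), g^{-1}) \in \dom(\a)$ and that $\a(r, g^{-1}) = p$. Applying the very same axiom to $(q,g)$ yields likewise that $(r, g^{-1}) \in \dom(\a)$ and $\a(r, g^{-1}) = q$. The key observation is that the inverse is effected by $g^{-1}$ regardless of the base point, so both computations collapse the image point $r$ back along the single group element $g^{-1}$.

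Finally, since $\a$ is by hypothesis a (single-valued) map on its domain, the quantity $\a(r, g^{-1})$ is uniquely determined. Hence $p = \a(r, g^{-1}) = q$, which is exactly injectivity of $\a(\cdot, g)$ where it is defined. There is no genuine obstacle here: the entire content of the lemma is that the inverse axiom renders each partial translation $\a(\cdot,g)$ invertible on its domain, with two-sided inverse $\a(\cdot,g^{-1})$, and injectivity follows formally from the existence of this left inverse. The only thing worth flagging is that one must not invoke Axiom 3 (the multiplicativity axiom), since that axiom carries the caveat about composing more than two translations; the argument needs only Axiom 2 and the fact that $\a$ is a function.
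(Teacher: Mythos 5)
Your proof is correct and is essentially identical to the paper's: both apply Axiom 2 to $(p,g)$ and $(q,g)$, observe that the common image $r=p.g=q.g$ is carried back by the single group element $g^{-1}$, and conclude $p=\a(r,g^{-1})=q$ because $\a$ is single-valued. No issues.
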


\begin{proof}
Suppose $\a(p,g)=\a(q,g)$ for some $p,q \in M$. In other words, $p.g=q.g$. By Axiom 2 of Definition \ref{D:partial}, $p.g.g^{-1}=p$ and $q.g.g^{-1}=q$ and both are well-defined. Therefore, $p=q$ as required.

\end{proof}

\begin{cor}
Let $M$ be a local $G$-space. Let $K \subset M$ be compact, $g \in G$ and suppose $K.g$ is well-defined. Then the map $\a(\cdot, g)$ restricted to $K$ is a homeomorphism onto its image.
\end{cor}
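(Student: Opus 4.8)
The plan is to observe that $\a(\cdot,g)\resto K$ is a continuous bijection from the compact space $K$ onto its image $K.g$, and then to invoke the standard fact that such a map is automatically a homeomorphism onto its image.

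First I would check that $\a(\cdot,g)\resto K$ is continuous: it is the composition of the inclusion $k \mapsto (k,g)$ of $K$ into $\dom(\a)$ (which is well-defined precisely because $K.g$ is assumed well-defined) with the continuous map $\a$. It is injective by Lemma \ref{L:injective}, and by construction its image is exactly $K.g$. Thus it is a continuous bijection $K \to K.g$, and the only remaining content is continuity of the inverse.

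For this I would use compactness. Since $K$ is compact and $\a(\cdot,g)$ is continuous, $K.g$ is compact; and since $M$ is lcsc it is Hausdorff, so $K.g$ is Hausdorff in the subspace topology. A continuous bijection from a compact space to a Hausdorff space is a closed map (a closed subset of $K$ is compact, its image is compact, hence closed in the Hausdorff target), so the set-theoretic inverse is continuous. This yields the desired homeomorphism.

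There is essentially no obstacle here; the argument is a routine combination of Lemma \ref{L:injective} with a standard point-set topology fact. The only point worth verifying is that the inverse really is continuous, and in fact Axiom 2 of Definition \ref{D:partial} identifies it explicitly: for $p \in K$ one has $(\a(p,g),g^{-1}) \in \dom(\a)$ and $\a(\a(p,g),g^{-1}) = p$, so the inverse of $\a(\cdot,g)\resto K$ is simply $\a(\cdot,g^{-1})\resto K.g$, which is continuous as a restriction of $\a$. This remark shows that compactness is not strictly needed; but since the corollary is stated for compact $K$, the compact-to-Hausdorff packaging is the most transparent way to present it, and I would record the $\a(\cdot,g^{-1})$ description as the concrete form of the inverse.
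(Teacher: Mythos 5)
Your argument is correct and is essentially the paper's own proof: the paper likewise reduces to showing the restriction is continuous (from joint continuity of $\a$) and injective (from Lemma \ref{L:injective}), with compactness of $K$ supplying the homeomorphism via the standard compact-to-Hausdorff fact. Your closing observation that Axiom 2 exhibits the inverse explicitly as $\a(\cdot,g^{-1})$ is a valid bonus but does not change the substance.
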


\begin{proof}
Because $K$ is compact, it suffices to prove the map is continuous and injective. Continuity follows from joint continuity of $\a$ and injectivity follows from the previous lemma.
\end{proof}

\subsection{Charts}

\begin{defn}
Let $(M,\a)$ be a local $G$-space and $p \in M$. A {\bf chart centered at $p$} is a homeomorphism $f_p:\dom(f_p) \to \rng(f_p)$ where $\dom(f_p) \subset M$ is an open neighborhood of $p$, $\rng(f_p)$ is an open neighborhood of the identity in $G$ and $g=f_p(p.g)$ for all $g \in \rng(f_p)$. By Axiom 4 of Definition \ref{D:partial}, for every $p \in M$ there exists a chart centered at $p$.  
\end{defn}

We will show that the transition functions between two charts are locally given by left-translation in $G$. 

\begin{defn}
Let $A, B \subset G$ be Borel sets. A map $\phi:A \to B$ is {\bf locally left-translation} if there exists a decomposition $A = \sqcup_{i \in I} A_i$ into relatively open sets and $\{g_i\}_{i\in I} \subset G$ (for some countable index set $I$) such that $\phi(a)=g_ia$ for all $a\in A_i$. By relatively open we mean $A_i$ is open in $A$.  
\end{defn}

\begin{prop}\label{P:chart}
Let $(M,\a)$ be a local $G$-space. Let $p,q \in M$ and let $f_p,f_q$ be charts centered at $p,q$ respectively. Let 
$$A=\{g \in \rng(f_p):~ p.g \in  \dom(f_p) \cap \dom(f_q)\}, \quad B = \{g \in \rng(f_q):~ q.g \in  \dom(f_p) \cap \dom(f_q)\}.$$
Then there is a map $\tau:A \to B$ which is locally left-translation such that $f_q(r) = \tau( f_p(r))$ for all $r \in \dom(f_p) \cap \dom(f_q)$. Moreover, $\tau$ is bijective and left-Haar-measure-preserving. 
\end{prop}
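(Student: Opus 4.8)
The plan is to realize the transition map concretely as $\tau := f_q \circ f_p^{-1}$ and then show it is locally given by left-translation using Axioms 3 and 4. First I would record the elementary consequences of the defining chart relation $g = f_p(p.g)$: it identifies $f_p^{-1}$ with the map $g \mapsto p.g$ on $\rng(f_p)$, so that $A = f_p(\dom(f_p)\cap\dom(f_q))$ and $B = f_q(\dom(f_p)\cap\dom(f_q))$ are open subsets of $G$, and $\tau = f_q\circ f_p^{-1}$ is a homeomorphism from $A$ onto $B$. The identity $f_q(r)=\tau(f_p(r))$ for $r\in\dom(f_p)\cap\dom(f_q)$ and the bijectivity of $\tau$ are then immediate from this definition.

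The heart of the argument is the local left-translation property. I would fix $g_0\in A$, set $r_0 = p.g_0$ and $a_0 = f_q(r_0)=\tau(g_0)$, and note that the chart relation for $f_q$ gives $q.a_0 = r_0$. For $g\in A$ near $g_0$, I would apply Axiom 3 twice: based at $p$ it yields $p.g = r_0.(g_0^{-1}g)$, and based at $q$ it yields $q.(\tau(g)) = r_0.(a_0^{-1}\tau(g))$. Since $q.(\tau(g)) = p.g$ (again by the chart relation for $f_q$), these two expressions are equal. The domain hypotheses of Axiom 3 are satisfied once $g$ is close enough to $g_0$: the elements $g_0^{-1}g$ and $a_0^{-1}\tau(g)$ both tend to $1_G$ (the second by continuity of $\tau$), hence eventually lie in the neighborhood $O_{r_0}$ furnished by Axiom 4, on which $x\mapsto r_0.x$ is a homeomorphism and in particular injective. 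Cancelling gives $g_0^{-1}g = a_0^{-1}\tau(g)$, i.e. $\tau(g) = (a_0 g_0^{-1})\,g$ on a neighborhood of $g_0$. Thus $h(g) := \tau(g)\,g^{-1}$ is locally constant on $A$.

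To produce the decomposition in the definition of locally left-translation, I would take the level sets of $h$. They are open (as $h$ is locally constant), pairwise disjoint, and cover $A$, hence each is clopen in $A$; and since $A$ is a subspace of the second countable group $G$, it is Lindelöf, so a disjoint open cover must be countable. This yields $A=\sqcup_{i\in I}A_i$ with $I$ countable and $\tau(a)=g_i a$ on $A_i$, where $g_i$ is the common value of $h$ there. For the measure statement, left-Haar measure $\lambda$ is left-invariant, so $\tau$ carries any measurable $S\subseteq A_i$ to $g_i S$ with $\lambda(g_i S)=\lambda(S)$; because $\tau$ is injective and the $A_i$ partition $A$, summing over $i$ gives $\lambda(\tau(S))=\lambda(S)$ for every measurable $S\subseteq A$.

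I expect the main obstacle to be the careful bookkeeping in the local step: verifying that all domain conditions of Axiom 3 hold for $g$ sufficiently close to $g_0$ — which is where continuity of $\tau$ is needed, to control $a_0^{-1}\tau(g)$ — and that both group elements lie in the single neighborhood $O_{r_0}$ on which Axiom 4 supplies injectivity, so that the cancellation is legitimate. Once this local normal form $\tau(g)=(a_0 g_0^{-1})g$ is in hand, the identification of $\tau$, the countability of the partition, and the Haar invariance are all routine.
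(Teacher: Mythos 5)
Your proposal is correct and follows essentially the same route as the paper: both identify the transition function near a point $r_0=p.g_0=q.a_0$ as left-translation by $a_0g_0^{-1}$ (the paper's $C_r=hg^{-1}$) via Axiom 3 plus the local injectivity from Axiom 4, and then use second countability of $G$ to make the decomposition countable. The only difference is cosmetic: you define $\tau=f_q\circ f_p^{-1}$ globally and extract the countable partition from the level sets of the locally constant map $g\mapsto\tau(g)g^{-1}$ via the Lindel\"of property, whereas the paper builds $\tau$ piecewise from a countable subcover by charts $f_{r_i}$ and then checks well-definedness.
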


\begin{proof}

Let $r \in  \dom(f_p) \cap \dom(f_q)$. Let $g \in \rng(f_p)$ be such that $p.g=r$. Let $h \in \rng(f_q)$ be such that $q.h=r$. Choose a chart $f_r$ centered at $r$. After choosing $\rng(f_r)$ smaller if necessary we may assume $g\rng(f_r) \subset \rng(f_p)$ and $h \rng(f_r) \subset \rng(f_q)$. This implies $r.k = p.g.k = p.gk$ for all $k \in \rng(f_r)$. Similarly, $r.k = q.hk$. 

Let $C_r =  hg^{-1}$. We claim that $f_q(r.k)=C_r f_p(r.k)$ for all $k \in \rng(f_r)$. This follows from
$$f_q(r.k) = f_q(p.hk) = hk =  C_r g k = C_r f_p(p.gk) = C_r f_p(r.k).$$
Since $r.\rng(f_r) =\dom(f_r)$, this implies $f_q(s)=C_r f_p(s)$ for all $s\in \dom(f_r)$. 

Because $G$ is lcsc, it follows that there are a countable index set $I$, $\{r_i\}_{i\in I} \subset  \dom(f_p) \cap \dom(f_q)$ such that 
$$ \dom(f_p) \cap \dom(f_q) \subset \cup_{i\in I} \dom(f_{r_i})$$
where $f_{r_i}$ are as above.

Let $A, B \subset G$ be as in the statement. Define  $\tau:A \to B$ by $\tau(g) = C_{r_i} g$ if $p.g \in \dom(f_{r_i})$. By the previous paragraph $\tau(f_p(r))=f_q(r)$ for all $r\in  \dom(f_p) \cap \dom(f_q)$. In particular since $f_p$ and $f_q$ are homeomorphisms, $\tau$ is well-defined, bijective and continuous. Since $g \mapsto \tau(g)g^{-1}$ is locally constant, $\tau$ is locally left-translation and therefore it is left-Haar-measure-preserving.
\end{proof}

\subsubsection{Measures}

Here we show that a local $G$-space admits a canonical measure.

\begin{prop}[The canonical measure]
Let $(M,\a)$ be a local $G$-space. Fix a left-Haar measure $\Haar_G$ on $G$. Then there exists a unique Radon measure $\vol_M$ on $M$ satisfying the following. If $p \in M$, $f_p$ is a chart centered at $p$ and $K \subset \dom(f_p)$ is Borel then 
\begin{eqnarray}\label{E:vol}
 \vol_M(K) = \Haar_G(\{g \in \rng(f_p):~ p.g  \in K\}) = \Haar_G(f_p(K)).
 \end{eqnarray}
\end{prop}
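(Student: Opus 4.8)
The plan is to build $\vol_M$ by transporting $\Haar_G$ through charts and gluing the local pieces, with Proposition \ref{P:chart} guaranteeing that the pieces are mutually consistent. First observe that the two expressions in \eqref{E:vol} are tautologically equal: since $g = f_p(p.g)$ for $g \in \rng(f_p)$, the set $\{g \in \rng(f_p) : p.g \in K\}$ is precisely $f_p(K)$ whenever $K \subset \dom(f_p)$. So the genuine content is the existence and uniqueness of a single Radon measure restricting correctly to every chart. For each chart $f_p$, define a Borel measure $\mu_p$ on $\dom(f_p)$ by $\mu_p(K) = \Haar_G(f_p(K))$; equivalently $\mu_p = (f_p^{-1})_\ast(\Haar_G \res \rng(f_p))$, the pushforward of Haar measure under the homeomorphism $g \mapsto p.g$. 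Shrinking $\rng(f_p)$ if necessary (using that $G$ is locally compact), we may assume $\rng(f_p)$ is relatively compact, so that $\mu_p$ is finite.

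The key step is compatibility on overlaps: if $f_p, f_q$ are charts and $K \subset \dom(f_p) \cap \dom(f_q)$ is Borel, then $\mu_p(K) = \mu_q(K)$. By Proposition \ref{P:chart} there is a bijective, locally-left-translation, hence left-Haar-measure-preserving map $\tau$ with $f_q(r) = \tau(f_p(r))$ for all $r$ in the overlap. Thus $f_q(K) = \tau(f_p(K))$ and $\Haar_G(f_q(K)) = \Haar_G(f_p(K))$, i.e. $\mu_q(K) = \mu_p(K)$. This is the heart of the argument and the only place the local $G$-space structure is used beyond the mere existence of charts; it rests entirely on left-invariance of $\Haar_G$ via Proposition \ref{P:chart}.

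Now glue. Since $M$ is second countable, choose a countable family of charts $f_{p_i}$ whose domains $U_i := \dom(f_{p_i})$ cover $M$, and set $E_i = U_i \setminus \bigcup_{j<i} U_j$, a Borel partition of $M$ with $E_i \subset U_i$. Define
$$\vol_M(K) = \sum_{i} \mu_{p_i}(K \cap E_i),$$
which is a Borel measure. To verify \eqref{E:vol} for an arbitrary chart $f_p$ and Borel $K \subset \dom(f_p)$, note $K \cap E_i \subset \dom(f_p) \cap U_i$, so compatibility gives $\mu_{p_i}(K \cap E_i) = \mu_p(K \cap E_i)$; summing and using countable additivity of $\mu_p$ yields $\vol_M(K) = \mu_p(K) = \Haar_G(f_p(K))$. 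Finiteness of each $\mu_{p_i}$ on the relatively compact range shows $\vol_M(U_i) < \infty$, hence $\vol_M$ is locally finite and therefore finite on compact sets; since $M$ is lcsc, local finiteness of a Borel measure implies it is Radon.

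For uniqueness, suppose $\nu$ is any measure satisfying \eqref{E:vol}. Then for each $i$ and Borel $K \subset U_i$ the formula forces $\nu(K) = \Haar_G(f_{p_i}(K)) = \mu_{p_i}(K)$; decomposing an arbitrary Borel set along the partition $\{E_i\}$ and using countable additivity gives $\nu = \vol_M$. The main obstacle, and the only substantive point, is the overlap compatibility in the second paragraph; everything else is the routine mechanics of assembling a measure from a countable atlas together with the standard fact that locally finite Borel measures on lcsc spaces are Radon.
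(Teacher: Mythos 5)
Your proof is correct and follows essentially the same route as the paper: decompose along a countable atlas, define the measure chartwise by pushing forward $\Haar_G$, and use Proposition \ref{P:chart} for consistency on overlaps. The only cosmetic difference is that you fix one global Borel partition and then verify the chart formula, while the paper allows an arbitrary subordinate decomposition and checks independence of the choice; you are also somewhat more explicit about uniqueness and the Radon property, which the paper treats tersely.
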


\begin{proof}
Let $K \subset M$ be Borel. Because $M$ is lcsc there exist a countable index set $I$, charts $\{f_i\}_{i\in I}$ with $f_i$ centered at $p_i \in M$ such that $K \subset \cup_{i\in I} \dom(f_i)$. Therefore, there is a Borel decomposition $K = \sqcup_{i\in I} K_i$ with $K_i \subset \dom(f_i)$ for all $i$. We define
$$\vol_M(K) = \sum_{i\in I} \Haar_G(f_i(K_i)).$$
In order to show this is well-defined, suppose that $J$ is a countable index set, $\{g_j\}_{j\in J}$ are charts, $K = \sqcup_{j \in J} L_j$ is a Borel decomposition and $L_j \subset \dom(g_j)$. We must show
$$\sum_{i\in I} \Haar_G(f_i(K_i)) = \sum_{j\in J} \Haar_G(g_j(L_j)).$$
By countable additivity, it suffices to show that 
$$\Haar_G(f_i(K_i \cap L_j)) =  \Haar_G(g_j(K_i \cap L_j)).$$
for all $i \in I, j \in J$. This follows from Proposition \ref{P:chart}.

This shows that $\vol_M$ is well-defined and satisfies $\vol_M(K) =  \Haar_G(f_p(K))$ whenever $f$ is a chart with $K \subset \dom(f)$. Because $f_p$ is a measure-preserving homeomorphism and $G$ is locally compact, it follows that $\vol_M$ is a Radon measure. 

\end{proof}

\begin{defn}
Let $\d:G \to \R_{>0}$ be the {\bf modular function}. This means that if $S \subset G$ has finite Haar measure then $\Haar_G(Sg)=\d(g)\Haar_G(S)$ where $\Haar_G$ is a left-Haar measure on $G$. The modular function is a homomorphism. $G$ is {\bf unimodular} if $ \d(g)=1$ for all $g\in G$. This means that Haar measure on $G$ is both left and right $G$-invariant.
\end{defn}

\begin{lem}[Locally measure-preserving]\label{L:mp}
Let $(M,\a)$ be a local $G$-space and suppose $K \times \{g\} \subset \dom(\a)$ for some measurable $K \subset M$ and $g \in G$. Then 
$$\vol_M(K.g) = \d(g)\vol_M(K).$$
\end{lem}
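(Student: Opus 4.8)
The plan is to reduce the statement to a purely local computation inside charts, where the map $\a(\cdot,g)$ takes the form of conjugation by $g$ on a neighborhood of $1_G$, and then to invoke the definition of the modular function together with the left-invariance of Haar measure. Everything else is bookkeeping via countable additivity of $\vol_M$.

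First I would fix $r\in K$ and choose a chart $f_r$ centered at $r$ and a chart $f_{r.g}$ centered at $r.g$; the latter makes sense because $r\in K$ forces $(r,g)\in\dom(\a)$, so $r.g$ is defined. The key local identity is that, for $k$ in a small enough neighborhood $O_r$ of $1_G$,
$$(r.k).g = (r.g).(g^{-1}kg).$$
This follows from two applications of Axiom 3 of Definition \ref{D:partial}: one gives $(r.k).g=r.(kg)$, and the other gives $r.(kg)=(r.g).(g^{-1}kg)$. The delicate point, and the main obstacle, is verifying the domain hypotheses of Axiom 3. Since $\dom(\a)$ is open and contains $(r,g)$, continuity of $\a$ forces $(r.k,g)\in\dom(\a)$ and $(r,kg)\in\dom(\a)$ once $k$ is close enough to $1_G$; shrinking $O_r$ further I would also arrange $g^{-1}O_rg\subset\rng(f_{r.g})$, which secures $(r.g,g^{-1}kg)\in\dom(\a)$. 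With these in place the two invocations of Axiom 3 are legitimate and the identity holds on $O_r$.

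Reading this identity through the charts shows that a point of $W_r:=r.O_r$ with $f_r$-coordinate $k$ is carried by $\a(\cdot,g)$ to a point of $W_r.g$ with $f_{r.g}$-coordinate $g^{-1}kg$; that is, in chart coordinates $\a(\cdot,g)$ is the conjugation $k\mapsto g^{-1}kg$. Hence for any Borel $P\subset W_r\cap K$, setting $S=f_r(P)$, the defining property \eqref{E:vol} of $\vol_M$ gives $\vol_M(P)=\Haar_G(S)$ and $\vol_M(P.g)=\Haar_G(g^{-1}Sg)$. Left-invariance of $\Haar_G$ yields $\Haar_G(g^{-1}Sg)=\Haar_G(Sg)$, and the definition of the modular function gives $\Haar_G(Sg)=\d(g)\Haar_G(S)$. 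Therefore $\vol_M(P.g)=\d(g)\vol_M(P)$ on every such local piece.

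Finally I would globalize. The family $\{W_r\}_{r\in K}$ is an open cover of $K$, so since $M$ is lcsc there is a countable subcover $\{W_{r_i}\}$; putting $P_i=(K\cap W_{r_i})\setminus\bigcup_{j<i}W_{r_j}$ yields a Borel partition $K=\bigsqcup_i P_i$ with $P_i\subset W_{r_i}\cap K$. By Lemma \ref{L:injective} the map $\a(\cdot,g)$ is injective on $K$ (as $K\times\{g\}\subset\dom(\a)$), so $K.g=\bigsqcup_i P_i.g$ is a disjoint union of Borel sets. Countable additivity of $\vol_M$ then gives
$$\vol_M(K.g)=\sum_i\vol_M(P_i.g)=\d(g)\sum_i\vol_M(P_i)=\d(g)\vol_M(K),$$
which is the claim.
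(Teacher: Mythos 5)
Your proof is correct, and its global skeleton --- cover $K$ by countably many chart neighborhoods, disjointify into Borel pieces, use injectivity of $\a(\cdot,g)$ from Lemma \ref{L:injective}, and finish by countable additivity --- is the same as the paper's. The difference lies in the local step. The paper works with a single chart $f_p$ centered at $p$ and shows $f_p(K.g)=f_p(K)g$, so that in one coordinate system $\a(\cdot,g)$ becomes right translation by $g$ and the modular function appears immediately; this implicitly requires the range of that one chart to contain $O_pg$ as well as $O_p$, a point the paper passes over quickly. You instead read the map through two charts, one at $r$ and one at $r.g$, obtaining the conjugation $k\mapsto g^{-1}kg$, and then pay one extra line --- left-invariance of $\Haar_G$ to pass from $\Haar_G(g^{-1}Sg)$ to $\Haar_G(Sg)$ --- before invoking the definition of $\d$. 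Your route is marginally longer but avoids having to see both $K$ and $K.g$ in a single chart, and your verification of the domain hypotheses for the two applications of Axiom 3 of Definition \ref{D:partial} (openness of $\dom(\a)$ together with shrinking $O_r$ so that $g^{-1}O_rg\subset\rng(f_{r.g})$) is complete. Both arguments are sound; yours is, if anything, slightly more careful at the one place where care is needed.
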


\begin{proof}
Let $p\in K$. Since $(p,g)\in \dom(\a)$ and $\dom(\a)$ is open in $M\times G$, there are open neighborhoods $U,V$ of $1_G$ in $G$ such that $\{p\} \times U \subset \dom(\a)$, $\{p\}\times Vg \subset \dom(\a)$. By Axiom 4 of Definition \ref{D:partial}), there is an open neighborhood $O_p$ of the identity in $G$ such that $\a(p,\cdot)$ restricts to a homeomorphism from $O_p$ to an open neighborhood $p.O_p$ of $p$ in $M$. After intersecting $O_p$ with $U$ and $V$ if necessary, we may assume $O_p \subset U \cap V$. 

For every $h \in O_p$, if $p.h \in K$ then $p.h$, $p.h.g$, $p.hg$ are all well-defined (and therefore $p.h.g=p.hg$ by Axiom 3 of Definition \ref{D:partial}).

Since $M$ is lscs, there exists a countable subset $\{p_i\}_{i \in I} \subset K$ and open neighborhood $O_i=O_{p_i}\subset G$ of the identity as above such that 
$$\bigcup_{i\in I} p_i.O_{i}  \supset K.$$
So there is a  measurable partition $K = \sqcup_{i=1}^\infty K_i$ such that for each $i$, $K_i \subset p_i.O_i$. Because the map $\a(\cdot, g):K \to M$ is injective (by Lemma \ref{L:injective}) and
$$\vol_M(K.g) = \sum_{i=1}^\infty \vol_M(K_i.g)$$
we may assume without loss of generality $K \subset p.O_p$ for some $p \in K$. 

We claim that $f_p(K.g)=f_p(K)g$ (and both sides are well-defined). To see this, let $k \in K \subset p.O_p$. Then there is $h \in O_p$ such that $k=p.h$. By choice of $O_p$,  $p.h.g=p.hg$ and both are well-defined. Thus 
$$f_p(k.g)=f_p(p.hg)=hg = f_p(p.h)g = f_p(k)g.$$
Since $k \in K$ is arbitrary, this proves the claim. So
$$\Haar_M(K.g) = \Haar_G( f_p(K.g) ) = \Haar_G( f_p(K)g) = \d(g)\Haar_G( f_p(K))  = \d(g)\vol_M(K).$$
The first and last equalities hold by definition of $\vol_M(\cdot)$. 
\end{proof}

\subsection{Metrics}

In this section, we show that, given a left-invariant proper metric $d_G$ on $G$, there is a canonical induced metric $d_M$ on any local $G$-space $M$. 

\begin{defn}
A local $G$-space $M$ is {\bf transitive} if for every $p,q \in M$ there exist elements $g_1,\ldots, g_n \in G$ such that $q= p.g_1.\cdots. g_n$. Equivalently, this means there exist charts $f_1,\ldots, f_n$ such that $p \in \dom(f_1), q \in \dom(f_n)$ and $\dom(f_i)\cap \dom(f_{i+1}) \ne \emptyset$ for all $i$. 
\end{defn}

\begin{defn}\label{D:injrad1}
Let $G$ be an lcsc group with a left-invariant proper metric $d_G$. Let $M$ be a local $G$-space. Let $B(\rho)\subset G$ be the open ball of radius $\rho$ centered at $1_G$. For $p\in M$, let $\injrad(M,p)$ be the supremum over all $\rho>0$ such that
\begin{enumerate}
\item for any $g,h \in G$ with $g, h, gh \in B(\rho)$, $p.g.h = p.gh$ (in particular, both sides are well-defined);
\item the restriction of $\a(p,\cdot)$ to $B(\rho)$ is a homeomorphism onto its image.
\end{enumerate}
This is the {\bf injectivity radius} at $p$.
\end{defn}

\begin{thm}\label{T:metric}
Let $d_G$ be a left-invariant proper metric on $G$. Let $M$ be a transitive left-$G$-space. Then there is a metric $d_M$ on $M$ satisfying the following local condition. For all $p \in M$, if $\rho=\injrad(M,p)$ and $g \in B(\rho)$ then $d_M(p,p.g)=d_G(1_G,g)$. 

\end{thm}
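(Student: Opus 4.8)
The plan is to build $d_M$ as a chain metric (a ``word length'' induced by $d_G$ through the partial action) and then to read off the local normalization from the two clauses defining $\injrad(M,p)$. Concretely, for $p,q\in M$ I would set
\[ d_M(p,q)=\inf\Big\{\textstyle\sum_{i=1}^n d_G(1_G,g_i):\ q=p.g_1.g_2.\cdots.g_n,\ \text{each partial product well-defined}\Big\}. \]
Transitivity of $M$ guarantees that for every pair $p,q$ this infimum ranges over a nonempty set, so $d_M$ is finite. The metric axioms are then routine: symmetry follows by reversing a chain, since Axiom 2 turns $q=p.g_1.\cdots.g_n$ into $p=q.g_n^{-1}.\cdots.g_1^{-1}$ and left-invariance gives $d_G(1_G,g_i^{-1})=d_G(1_G,g_i)$; the triangle inequality follows by concatenating chains; and $d_M(p,p)=0$ is witnessed by the trivial chain.

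The core of the argument --- and the main obstacle --- is the separation axiom together with the claimed local identity, both of which reduce to a single ``no shortcut'' lemma: \emph{if a chain $p.g_1.\cdots.g_n$ has total cost $\sum_i d_G(1_G,g_i)<\rho:=\injrad(M,p)$, then the chain collapses, i.e.\ $p.g_1.\cdots.g_n=p.(g_1\cdots g_n)$, the product $g_1\cdots g_n$ lies in $B(\rho)$, and $d_G(1_G,g_1\cdots g_n)\le\sum_i d_G(1_G,g_i)$.} I would prove this by induction on $n$ using the partial products $s_k=g_1\cdots g_k$. Left-invariance and the triangle inequality in $G$ give $d_G(1_G,s_k)\le\sum_{i\le k}d_G(1_G,g_i)<\rho$, so every $s_k$ (and every $g_k$) lies in $B(\rho)$; the first clause of Definition~\ref{D:injrad1} then lets me rewrite $p.s_{k-1}.g_k=p.(s_{k-1}g_k)=p.s_k$ at each step, propagating the identity $p.g_1.\cdots.g_k=p.s_k$.

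Granting the lemma, the local identity is immediate. The bound $d_M(p,p.g)\le d_G(1_G,g)$ comes from the one-step chain, which is admissible because the second clause of $\injrad$ makes $\a(p,\cdot)$ defined on $B(\rho)$. For the reverse bound, any chain from $p$ to $p.g$ of cost $<\rho$ collapses to give $p.g=p.s_n$ with $s_n,g\in B(\rho)$; injectivity of $\a(p,\cdot)$ on $B(\rho)$ forces $s_n=g$, whence the cost is at least $d_G(1_G,g)$; and any chain of cost $\ge\rho$ already exceeds $d_G(1_G,g)<\rho$. Separation is handled the same way: if $d_M(p,q)=0$ then there are chains from $p$ to $q$ of arbitrarily small cost, each collapsing to an expression $q=p.g$ with $g\in B(\rho)$ and $d_G(1_G,g)$ arbitrarily small; injectivity of $\a(p,\cdot)$ on $B(\rho)$ forces all such $g$ to coincide with one element $g^*$, and $d_G(1_G,g^*)=0$ gives $g^*=1_G$, hence $q=p$. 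I would also record the easy preliminary that $\injrad(M,p)>0$ for every $p$: Axiom 4 supplies the second clause on a small ball, while openness of $\dom(\a)$ and continuity of $\a$ supply the first clause on a possibly smaller ball.

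The delicate point to get right is the induction in the collapse lemma: one must keep all partial products inside $B(\rho)$ so that the associativity clause applies at every step, and this is exactly what the triangle inequality for $d_G$ buys. Everything else is formal, and the model case $M=G$ acting on itself (where the chain metric recovers $d_G$ by the same triangle-inequality estimate) serves as a useful sanity check.
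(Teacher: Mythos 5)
Your proposal is correct and follows essentially the same route as the paper: the same chain/word metric $d_M(p,q)=\inf\sum_i d_G(1_G,g_i)$ over admissible chains, with the local identity obtained by splitting into chains whose partial products stay in $B(\rho)$ (which collapse to a single step via the associativity clause of $\injrad(M,p)$, the triangle inequality, and injectivity of $\a(p,\cdot)$ on $B(\rho)$) and chains whose partial products leave $B(\rho)$ (which already cost at least $\rho$). You are in fact somewhat more careful than the paper, which leaves the collapse lemma, the verification of the metric axioms (in particular separation), and the positivity of the injectivity radius implicit.
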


\begin{proof}
Let $p,q \in M$. Define 
$$d_M(p,q) = \inf \sum_{i=1}^{n-1} d_G(1_G, g_i)$$ 
where the infimum is over all sequences $g_1,\ldots, g_n \in G$ such that $p.g_1.\cdots.g_n = q$.


Let $p\in M$. Let $\rho=\injrad(M,p)$. To finish the proof it suffices to show that if $g \in B(\rho)$ then $d_M(p, p.g) = d_G(1_G,g)$. 

So suppose $h_1,\ldots, h_n \in G$ and $p.g = p.h_1.\cdots.h_n$. We must show 
\begin{align}\label{E:thing10}
d_G(1_G,g) \le  \sum_{i=1}^{n} d_G(1_G, h_i).
\end{align}
If for every $i$, $h_1\cdots h_i \in B(\rho)$ then (\ref{E:thing10}) is immediate by the triangle inequality. So we assume there is some index $m$ such that $h_1\cdots h_{m+1} \notin B(\rho)$. We may assume $m$ is the smallest index for which this holds. Thus $h_1\cdots h_i \in B(\rho)$ for all $i \le m$ and $h_1\cdots h_{m+1} \notin B(\rho)$. 

Because $d_G$ is left-$G$-invariant,
$$d_G(1_G,h_1) + d_G(1_G,h_2) = d_G(1_G,h_1) + d_G(h_1,h_1h_2) \ge d_G(1_G, h_1h_2).$$
By an inductive argument we obtain 
$$ \sum_{i=1}^{m+1} d_G(1_G, h_i) \ge d_G(1_G, h_1\cdots h_{m+1}) \ge \rho > d_G(1_G,g).$$
This proves (\ref{E:thing10}). 
 

 \end{proof}

\subsection{Examples}

\begin{example}\label{ex:homog}
If $\G < G$ is discrete then $\G \backslash G$ admits a local $G$-space structure as follows. Define $\a:\G \backslash G \times G \to \G \backslash G$ by $\a(\G g,h) = \G gh$. 

Fix $g \in G$ and let $\pi:G \to \G \backslash G$ be the map $\pi(h) = \G gh$. Since $\G$ is discrete, $\pi$ is a covering space map. In particular, it is a local homeomorphism. This implies Axiom 4 of Definition \ref{D:partial}. The other Axioms are immediate. 
\end{example}

\begin{example}\label{E:subset}
Let $M \subset G$ be an open subset. Let $\dom(\a)=\{(p,g)\in M \times G:~pg \in M\}$ and define $\a:\dom(\a) \to M$ by $\a(p,g)=pg$. It is immediate that $(M,\a)$ is a local $G$-space. Moreover, $\vol_M$ is the restriction of $\Haar_G$ to $M$. 
\end{example}

\begin{example}\label{E:branched}
In this example, we show that it is possible for $p.g.h.k \ne p.ghk$ even when both sides are well-defined. Let $G=\C$ be the complex plane, as an additive group. Let $M$ be a double cover of $\C\setminus \{0\}$. To be precise, let $M=(0,\infty) \times \R/4\pi \Z$. For $a, b \in \R/4\pi\Z$, define
$$|a - b| = \inf_{n \in \Z} |a' - b' - 4\pi n|$$
where $a', b' \in \R$ satisfy $a' = a \mod 4\pi \Z$ and $b' = b \mod 4\pi \Z$.

Let 
$$\dom(\a)=\{ ( (r,\t), se^{i\phi} - re^{i\t} ) \in M \times \C:~ s> 0,~ |\phi -\t| < 2\pi/3\}.$$
Define an action $\a:\dom(\a) \to M$ by
$$\a( (r,\t), z) = (t, \phi)$$
where $te^{i\phi} = re^{i\t} + z$ and $\phi \in \R/4\pi \Z$ is chosen to minimize $|\t - \phi|$. Note there are only two different elements $\phi_1, \phi_2 \in \R/4\pi\Z$ that satisfy $te^{i\phi_j} = re^{i\t} + z$ (for $j=1,2$) and $\phi_1-\phi_2 = 2\pi \mod 4\pi\Z$. Because these elements are $2\pi$ apart, at most one of them can be within $2\pi/3$ of $\t$. Moreover, the definition of $\dom(\a)$ shows that exactly one of these elements is within $2\pi/3$ of $\t$. So $\a$ is well-defined. 

Next we check that $(M,\a)$ is a local $G$-space. Define 
$$\tf: M \to \C, \quad \tf(r,\t) = re^{i\t}.$$
Then $\tf$ is a 2-1 covering map of $\C \setminus \{0\}$. Also $\tf(\a(p, z)) = \tf(p) + z$. Because $\tf$ is a local homeomorphism, $\a$ satisfies Axiom 4 of Definition \ref{D:partial}. 

In order to check Axiom 3, suppose $(r,\t).z_1, (r,\t).z_1.z_2$ and $(r,\t).(z_1+z_2)$ are all well-defined. We must show  $(r,\t).z_1.z_2=(r,\t).(z_1+z_2)$.  Let $(r_1,\t_1) = (r,\t).z_1$, $(r_2,\t_2) = (r,\t).z_1.z_2$ and $(r_3,\t_3) = (r,\t).(z_1+z_2)$. 

The assumption that these are all well-defined implies $|\t - \t_1|<2\pi/3$, $|\t_1-\t_2|< 2\pi/3$ and $|\t-\t_3|<2\pi/3$. The triangle inequality implies $|\t_2 - \t_3| < 2\pi$. On the other hand, the definition of $\a$ implies that either $\t_2 = \t_3$ or $\t_2 = \t_3 + 2\pi$ (mod $4\pi$). So we must have $\t_2 = \t_3 \mod 4\pi$. Therefore Axiom 3 holds. The other Axioms are immediate. 

Next we show the existence of $p \in M$ and $g,h,k \in G$ such that $p.g.h.k \ne p.ghk$ even though both sides are well-defined. Let $p=(1,0) \in M$. Let $g=i -1$, $h=-i-1$, $k = 1-i$. Then $\tf(p)+g = 1+(i-1)=i$. So $p.g = (1,\pi/2)$. Also $\tf(p.g) + h = i + h = -1$. So $p.g.h = (1,\pi)$. Finally, $\tf(p.g.h)+ k = -1 + k = -i$. So $p.g.h.k = (1,3\pi/2)$. On the other hand, $g+h+k = -i-1$. So $p.ghk = (1,-\pi/2)$. Because $-\pi/2 \ne 3\pi/2 \mod 4\pi$, we have $p.g.h.k \ne p.ghk$. 

\end{example}

\section{Sofic groups}

\subsection{Definitions}

\begin{defn}\label{D:sofic group}
Let $M=(M,\a)$ be a local $G$-space and let $U \subset G$ be open and pre-compact and let $\eps>0$.  Let $M[U]=M[\a,U]$ be the set of all $p \in M$ such that if $g,h \in G$ are such that $g, h, gh \in U$ then $p.g.h=p.gh$ (in particular, both sides are well-defined). Moreover, we require that the map $g \mapsto \a(p,g)$ is a homeomorphism from $U$ to an open neighborhood of $p$. We say $M$ is a {\bf $(U,\eps)$-sofic approximation to $G$} if $\vol_M(M)<\infty$ and
$$\vol_M(M[U]) \ge (1-\eps) \vol_M(M).$$
\end{defn}


\begin{defn} A \textbf{sofic approximation} to $G$ is a sequence $\Sigma = (M_i)_{i=1}^\infty$ where $M_i$ is a $(U_i,\epsilon_i)$-sofic approximation such that the $U_i$ are pre-compact open sets increasing to $G$ and the sequence $\epsilon_i$ decreases to $0$. We say $G$ is {\bf sofic} if it admits a sofic approximation. 
\end{defn}

The following lemma will be generally helpful.

\begin{lem}\label{L:restriction100}
If $U\subset G$ is a pre-compact open neighborhood of the identity and $g \in U$ then $M[U].g \subset M[U \cap g^{-1}U]$.
\end{lem}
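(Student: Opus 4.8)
The plan is to unwind the two defining conditions for membership in $M[W]$, where I write $W = U \cap g^{-1}U$, and to verify each of them for the point $q = p.g$ by invoking the corresponding property of $p \in M[U]$. First I would record the preliminaries: since $U$ is a neighborhood of $1_G$ and $g \in U$, the set $W$ is again an open neighborhood of $1_G$ (the membership $1_G \in g^{-1}U$ is exactly $g \in U$), and $q = p.g$ is well-defined because the chart condition on $p$ makes $a \mapsto p.a$ a homeomorphism defined on all of $U \ni g$. The single book-keeping device used throughout is the equivalence $a \in g^{-1}U \iff ga \in U$.

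For the multiplicativity condition I would take $a,b \in G$ with $a,b,ab \in W$ and aim to collapse every relevant expression to $p.(gab)$. Concretely: since $g,a,ga \in U$ (the last because $a \in g^{-1}U$), the first defining property of $M[U]$ applied to $p$ gives $q.a = p.g.a = p.(ga)$; next, since $ga,b,gab \in U$ (using $ab \in g^{-1}U$ for the last), the same property gives $q.a.b = p.(ga).b = p.(gab)$; and separately, since $g,ab,gab \in U$, it gives $q.ab = p.g.(ab) = p.(gab)$. Comparing the two computations yields $q.a.b = q.ab$, and every intermediate term is well-defined along the way.

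For the chart/homeomorphism condition I would note that for $a \in W$ the identity $q.a = p.(ga)$ just derived shows that the map $a \mapsto q.a$ on $W$ factors as left-translation $L_g \colon W \to gW$, $a \mapsto ga$, followed by the restriction of the homeomorphism $a \mapsto p.a$ (guaranteed by $p \in M[U]$) to the open set $gW \subset U$. As a composition of homeomorphisms it is a homeomorphism of $W$ onto $p.(gW) = q.W$, and this image is open in the open neighborhood $p.U$ of $p$ and contains $q = p.g$ because $g \in gW$ (as $1_G \in W$). This supplies the required open neighborhood of $q$.

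The only real obstacle is the careful tracking of which three $U$-membership conditions are available at each application of multiplicativity for $p$; each of them reduces, via the equivalence $a \in g^{-1}U \iff ga \in U$ together with $W \subset U$, to the standing hypotheses $a,b,ab \in W$ and $g \in U$. Once this accounting is in place, no further input is needed and both conditions follow, giving $q = p.g \in M[U \cap g^{-1}U]$.
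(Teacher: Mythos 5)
Your proof is correct and follows essentially the same route as the paper's: verify the multiplicativity condition for $q=p.g$ by three applications of the multiplicativity of $p\in M[U]$ (to the triples $(g,a)$, $(ga,b)$, $(g,ab)$), and obtain the chart condition by factoring $a\mapsto q.a$ as left-translation by $g$ followed by the homeomorphism $a\mapsto p.a$ restricted to $gW\subset U$. The only difference is cosmetic: you make explicit the third application of multiplicativity, which the paper leaves implicit in the phrase ``combine these equalities.''
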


\begin{proof}
Let $p\in M[U]$, $g \in U$ and $h, k \in G$ be such that $h, k, hk \in U \cap g^{-1}U$. To show $p.g \in M[U \cap g^{-1}U]$, we first show $p.g.h.k = p.g.hk$. 

Because $p \in M[U]$, and $g,h,gh \in U$ we have $p.g.h=p.gh$. Because $gh, k, ghk \in U$ we have $p.gh.k = p.ghk$. Combine these equalities to obtain $p.g.h.k = p.g.hk$.

Next define $\b:U \cap g^{-1}U \to M$ by $\b(k) = p.g.k$. We must show $\b$ is a homeomorphism onto an open neighborhood of $p.g$. Let $\g:U \to M$ be the map $\g(k) = p.k$. This map is a homeomorphism onto an open neighborhood of $p$ since $p \in M[U]$. Moreover, $\b(k) = p.g.k = p.gk = \g(gk)$. So $\b$ is the composition of left multiplication by $g$ with $\g$. This implies it has the claimed properties.


\end{proof}

\subsection{Unimodularity}


The goal of this section is to prove:
\begin{thm}\label{T:unimodular}
If $G$ is sofic then $G$ is unimodular.
\end{thm}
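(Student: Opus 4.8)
The plan is to exploit the tension between two facts: a sofic approximation $M_i$ has finite total volume, and the modular function scales volume under the partial action (Lemma \ref{L:mp}). If $G$ were not unimodular, there would be an element $g$ with $\delta(g) \neq 1$; applying the partial action by $g$ would then contract or expand volume, but on the good part $M_i[U]$ this map is essentially a measure-multiplying bijection onto another good set, and iterating this would be incompatible with a fixed finite total volume.

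Here is how I would carry it out. Suppose $\delta(g) > 1$ for some $g$ (if $\delta(g) < 1$ replace $g$ by $g^{-1}$, using that $\delta$ is a homomorphism). Fix a pre-compact open neighborhood $U$ of the identity large enough that $g \in U$. By Lemma \ref{L:restriction100}, $M_i[U].g \subset M_i[U \cap g^{-1}U]$, and by Lemma \ref{L:mp} (locally measure-preserving),
\begin{equation*}
\vol_{M_i}(M_i[U].g) = \delta(g)\,\vol_{M_i}(M_i[U]).
\end{equation*}
First I would combine these: since $M_i[U].g \subset M_i$, the left side is at most $\vol_{M_i}(M_i)$, so $\delta(g)\,\vol_{M_i}(M_i[U]) \le \vol_{M_i}(M_i)$. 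But as $i \to \infty$ the sofic condition forces $\vol_{M_i}(M_i[U]) \ge (1-\epsilon_i)\vol_{M_i}(M_i)$ with $\epsilon_i \to 0$ (choosing $i$ large enough that $U_i \supset U$, so that $M_i[U_i] \subset M_i[U]$ and hence $\vol_{M_i}(M_i[U]) \ge (1-\epsilon_i)\vol_{M_i}(M_i)$). Taking the ratio $\vol_{M_i}(M_i[U])/\vol_{M_i}(M_i) \to 1$ gives $\delta(g) \le 1$ in the limit, contradicting $\delta(g) > 1$.

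The main technical point I would need to verify carefully is that $M_i[U] \times \{g\} \subset \dom(\alpha)$ so that Lemma \ref{L:mp} genuinely applies — that is, that $p.g$ is defined for every $p \in M_i[U]$. This holds by the definition of $M_i[U]$: for $p \in M[U]$ the map $g \mapsto \alpha(p,g)$ is a homeomorphism on all of $U$, and $g \in U$, so $p.g$ is well-defined. A secondary point is the direction of the inequality and the role of $U \cap g^{-1}U$: the inclusion $M_i[U].g \subset M_i[U \cap g^{-1}U] \subset M_i$ is all I need, since I only use that the image sits inside $M_i$ and has volume $\delta(g)\vol_{M_i}(M_i[U])$. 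I expect the only real obstacle to be bookkeeping: ensuring $U$ is chosen once and then $i$ is taken large enough that $U_i \supset U$, so that the good sets nest correctly and the $\epsilon_i \to 0$ estimate transfers to the fixed neighborhood $U$.
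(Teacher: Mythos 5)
Your proof is correct and follows essentially the same route as the paper: both arguments apply Lemma \ref{L:mp} to the good set $M_i[U]$ (on which the action of $g\in U$ is defined), compare $\delta(g)\vol_{M_i}(M_i[U])$ with the finite total volume $\vol_{M_i}(M_i)$, let $\epsilon_i\to 0$, and then use that $\delta$ is a homomorphism to upgrade the one-sided inequality to $\delta(g)=1$. The only cosmetic difference is that the paper bounds $\vol_M(M[U\cap g^{-1}U].g)$ from below via Lemma \ref{L:restriction100} to get $\delta(g)\ge 1-\epsilon$, whereas you bound $\vol_{M_i}(M_i[U].g)$ from above by $\vol_{M_i}(M_i)$ to get $\delta(g)\le 1$; both are the same estimate read in opposite directions.
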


\begin{proof}
Fix a left-Haar measure $\Haar_G$ on $G$. Let $\eps>0$, $g \in G$, and $U \subset G$ be a pre-compact open set containing $\{1_G,g, g^2\}$. Suppose $M$ is a $(U \cup g^{-1}U,\eps)$-sofic approximation.  


By Lemma \ref{L:restriction100}, $M[g^{-1}U].g^{-1} \subset M[U \cap g^{-1}U]$. Multiply by $g$ on the right to obtain $M[U\cap g^{-1}U].g \supset M[g^{-1}U]$. Thus 
$$\vol_M(M[U\cap g^{-1}U].g) \ge \vol_M(M[g^{-1}U]) \ge \vol_M(M[U\cup g^{-1}U]) \ge (1-\eps)\vol_M(M).$$

Let $\d:G \to \R$ be the modular function. Lemma \ref{L:mp} implies 
$$\vol_M(M[U\cap g^{-1}U].g)  = \d(g) \vol_M(M[U\cap g^{-1}U]).$$
Combining this with the previous inequality, we obtain
$$\vol_M(M) \ge \vol_M(M[U\cap g^{-1}U]) \ge \d(g)^{-1}(1-\eps)\vol_M(M).$$
Therefore, $\d(g) \ge 1-\eps$. Since this is true for every $\eps>0$, $\d(g)\ge 1$. However $\d:G \to \R_{>0}$ is a homomorphism. So $\d(g^{-1}) = \d(g)^{-1}$. Since also $\d(g^{-1})\ge 1$, we obtain $\d(g)=1$. Because $g\in G$ is arbitrary, $G$ must be unimodular. 

\end{proof}

\subsection{Amenable groups}

\begin{thm}\label{thm:amenable}
Every unimodular amenable lcsc group is sofic.
\end{thm}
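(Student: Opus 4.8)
The plan is to realize Følner sets as local $G$-spaces via Example \ref{E:subset} and thereby reduce soficity to a single Følner-type estimate. First I would record the following local computation: if $M = F \subseteq G$ is a precompact open set equipped with the local $G$-space structure of Example \ref{E:subset} (so $p.g = pg$ whenever $pg \in F$, and $\vol_M$ is the restriction of $\Haar_G$ to $F$), and $U$ is a precompact open neighborhood of $1_G$, then
$$M[U] = \{p \in F : pU \subseteq F\}.$$
Indeed, the associativity clause in the definition of $M[U]$ is automatic here because multiplication in $G$ is associative, so $p \in M[U]$ holds exactly when the homeomorphism $g \mapsto pg$ carries $U$ into $F$, i.e. when $pU \subseteq F$. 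Thus a $(U,\eps)$-sofic approximation of this form is precisely a precompact open $F$ whose ``$U$-core'' $\{p : pU \subseteq F\}$ captures at least a $(1-\eps)$-fraction of $\Haar_G(F)$.

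Next I would fix an exhaustion $U_1 \subseteq U_2 \subseteq \cdots$ of $G$ by precompact open neighborhoods of $1_G$ with $\bigcup_i U_i = G$ (possible since $G$ is lcsc, hence $\sigma$-compact) and a sequence $\eps_i \downarrow 0$. Using amenability together with unimodularity, I would invoke the Følner condition to produce, for each $i$, a precompact set $A_i$ of positive Haar measure with
$$\Haar_G(A_i U_i \,\triangle\, A_i) < \eps_i \,\Haar_G(A_i).$$
I would then set $F_i := A_i U_i$ and let $M_i$ be $F_i$ with the local $G$-space structure of Example \ref{E:subset}; note $F_i$ is precompact and open, so $\vol_{M_i}(M_i) = \Haar_G(F_i) < \infty$. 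The key point is that $A_i$ already lies in the $U_i$-core of $F_i$: for $p \in A_i$ we have $pU_i \subseteq A_i U_i = F_i$, so $A_i \subseteq M_i[U_i]$ by the identification above. Combining this with $\Haar_G(F_i) = \Haar_G(A_i U_i) \le (1+\eps_i)\Haar_G(A_i)$ yields $\vol_{M_i}(M_i[U_i]) \ge \Haar_G(A_i) \ge (1+\eps_i)^{-1}\vol_{M_i}(M_i) \ge (1-\eps_i)\vol_{M_i}(M_i)$. Hence each $M_i$ is a $(U_i,\eps_i)$-sofic approximation, and $(M_i)_i$ is a sofic approximation to $G$.

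The main obstacle, and the reason for thickening to $F_i = A_i U_i$ rather than taking $F_i = A_i$ directly, is that the complement of the $U$-core, $\{p \in F : pU \not\subseteq F\}$, is governed by a supremum over $u \in U$ rather than an average, and is genuinely not controlled by the outer Følner quantity $\Haar_G(FU \setminus F)$: a Følner set riddled with many small holes can have a small outer boundary yet a large core-complement. Passing to $F_i = A_i U_i$ sidesteps this entirely, since the core then automatically contains $A_i$ and no separate core estimate is required. I would also emphasize that this is precisely the step where unimodularity is essential: the Følner condition used to produce the sets $A_i$ characterizes amenability only among unimodular lcsc groups (non-unimodular amenable groups, such as the $ax+b$ group, fail it), consistent with Theorem \ref{T:unimodular}. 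The remaining points are routine to check: that $F_i$ is genuinely precompact and open, that $\vol_{M_i}$ is the restriction of Haar measure, and that the exhaustion $U_i$ and the errors $\eps_i$ meet the monotonicity requirements in the definition of a sofic approximation.
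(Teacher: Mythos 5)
Your proof is correct and follows essentially the same route as the paper: realize a thickened F\o lner set as a local $G$-space via Example \ref{E:subset} and observe that the unthickened set sits inside its $U$-core. The only difference is organizational --- the paper's Lemma \ref{L:Folner} derives the right-handed core condition from a left F\o lner sequence by thickening ($F_i'=K_iF_i$) and then inverting via unimodularity, whereas you take the right F\o lner estimate $\Haar_G(A_iU_i\,\triangle\,A_i)<\eps_i\Haar_G(A_i)$ as a known consequence of amenability plus unimodularity and thicken to $F_i=A_iU_i$; this is the same trick in a slightly different order, and the one step you leave as a citation is precisely the content of that lemma.
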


\begin{defn}
A locally compact group $G$ is {\bf amenable} if for every left-Haar measure $\Haar_G$, there exists a sequence $\{F_i\}_{i=1}^\infty$ of measurable sets with finite positive measure  such that for every compact $K \subset G$,
\begin{align}\label{Amenable0}
\lim_{i\to \infty} \frac{\Haar_G(KF_i)}{\Haar_G(F_i)}=1.
\end{align}
Such a sequence is called a {\bf left-F\o lner sequence}. 
\end{defn}

\begin{lem}\label{L:Folner}
If $G$ is amenable and unimodular then there exists a sequence $\{\tF_i\}_{i=1}^\infty$ of finite measure subsets such that each $\tF_i$ is pre-compact, open and for every compact $K \subset G$,
\begin{align}\label{Amenable}
\lim_{i\to \infty} \frac{\Haar_G(\{p \in \tF_i:~ pK \subset \tF_i\})}{\Haar_G(\tF_i)}=1.
\end{align}
\end{lem}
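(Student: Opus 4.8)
The plan is to reduce the statement to a single fixed compact set and then use compactness to replace a continuum of constraints by finitely many; that last replacement is the crucial point.

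First I would pass from the weak (outer) left-Følner condition to an \emph{inner} right-Følner condition. Since $G$ is unimodular, left Haar measure is also right invariant and inversion preserves $\Haar_G$; hence if $\{F_i\}$ is a left-Følner sequence then $\{F_i^{-1}\}$ satisfies $\Haar_G(F_iK\triangle F_i)/\Haar_G(F_i)\to 0$ for every compact $K$, i.e.\ it is right-Følner. By inner regularity of $\Haar_G$ I may replace each such set by a compact subset of almost full measure without spoiling this, so compact right-Følner sets of arbitrarily small outer defect exist for every compact $K$. It then suffices to prove: for every compact symmetric $K\ni 1_G$ and every $\delta>0$ there is an open pre-compact set $\tF$ with $0<\Haar_G(\tF)<\infty$ and $\Haar_G(\{p\in\tF:\ pK\not\subseteq\tF\})<\delta\,\Haar_G(\tF)$. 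Granting this, I exhaust $G$ by compact symmetric neighborhoods $K_1\subseteq K_2\subseteq\cdots$ of $1_G$ with $\bigcup_m K_m=G$, apply the statement with $(K,\delta)=(K_m,1/m)$ to obtain $\tF_m$, and note that any fixed compact $K$ satisfies $K\subseteq K_m$ for large $m$, whence the inner defect for $K$ is at most $1/m$; this yields (\ref{Amenable}).

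Fix now $K$ and $\delta$. The naive approach — bounding $\{p\in E:\ pK\not\subseteq E\}$ by $\bigcup_{k\in K}(E\setminus Ek^{-1})$ and using $\Haar_G(E\setminus Ek^{-1})=\Haar_G(Ek\setminus E)\le\Haar_G(EK\setminus E)$ — works verbatim in the discrete case but fails here because the union runs over the uncountable set $K$. \textbf{This continuum is the main obstacle}, and I resolve it by \emph{filling}. Choose a symmetric open pre-compact neighborhood $V$ of $1_G$ and, by compactness, finitely many $k_1,\dots,k_n\in K$ with $K\subseteq\bigcup_{j=1}^n k_jV$. Let $E$ be a compact right-Følner set for the compact set $K\cup V$ with $\Haar_G(E(K\cup V)\setminus E)\le\eps'\Haar_G(E)$, and put $\tF=EV$, which is open, pre-compact, of finite positive measure. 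The point of filling is the implication $pk_j\in E\Rightarrow pk_jV\subseteq EV=\tF$: since every $\kappa\in K$ lies in some $k_jV$, we get $p\kappa\in Ev\subseteq\tF$, so
$$\bigcap_{j=1}^n Ek_j^{-1}\ \subseteq\ \{p:\ pK\subseteq\tF\}.$$
Thus the continuum condition $pK\subseteq\tF$ is implied by a \emph{finite} intersection of right translates of $E$.

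It remains to estimate the defect. Using $E\subseteq\tF$ together with the displayed inclusion,
$$\{p\in\tF:\ pK\not\subseteq\tF\}\ \subseteq\ (\tF\setminus E)\ \cup\ \bigcup_{j=1}^n\bigl(E\setminus Ek_j^{-1}\bigr).$$
Here $\Haar_G(\tF\setminus E)=\Haar_G(EV\setminus E)\le\eps'\Haar_G(E)$, while unimodularity gives $\Haar_G(E\setminus Ek_j^{-1})=\Haar_G(Ek_j\setminus E)\le\Haar_G(EK\setminus E)\le\eps'\Haar_G(E)$ for each $j$. Hence the defect is at most $(n+1)\eps'\Haar_G(E)\le(n+1)\eps'\Haar_G(\tF)$, and choosing $\eps'<\delta/(n+1)$ (possible since arbitrarily good right-Følner sets exist) completes the fixed-$(K,\delta)$ step, and with it the lemma. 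The one subtlety to keep track of is that $n$ is determined by $K$ and $V$ \emph{before} $E$ is selected, so that $(n+1)\eps'$ can genuinely be driven below $\delta$.
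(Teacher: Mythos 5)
Your proof is correct, but it takes a genuinely different route from the paper's. The paper stays on the left for as long as possible: starting from a left-F\o lner sequence $\{F_i\}$ (made pre-compact), it picks an exhausting sequence of pre-compact open sets $K_i$ increasing to $G$, growing slowly enough that $\Haar_G(K_iF_i)/\Haar_G(F_i)\to 1$, and sets $F_i'=K_iF_i$. The thickening set is the exhausting set itself, so the passage from the outer to the inner F\o lner condition is immediate: $F_i\subseteq\{p\in F_i':\,K_ip\subseteq F_i'\}$, whence the inner defect for any compact $K\subseteq K_i$ is controlled by $\Haar_G(F_i')-\Haar_G(F_i)$ and no covering argument is needed; unimodularity enters exactly once, at the end, via $\Haar_G(E)=\Haar_G(E^{-1})$ to convert $Kp\subseteq F_i'$ into $pK\subseteq\tF_i$ with $\tF_i=(F_i')^{-1}$. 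You instead invert first, compactify by inner regularity, and then, for a fixed $(K,\delta)$, thicken on the right by a small neighborhood $V$ and tame the continuum of constraints $pK\subseteq EV$ by a finite subcover $K\subseteq\bigcup_j k_jV$ together with the inclusion $\bigcap_j Ek_j^{-1}\subseteq\{p:\,pK\subseteq EV\}$; this uses unimodularity twice (inversion-invariance and right-invariance). Your version isolates a clean fixed-$(K,\delta)$ statement and makes explicit where compactness of $K$ is used, at the price of the $(n+1)\eps'$ bookkeeping; the paper's version buys a one-line inner estimate at the price of weaving the ``slowly growing $K_i$'' diagonalization into the construction itself. One cosmetic point in your final assembly: to guarantee that every compact $K$ is eventually contained in some $K_m$ you should arrange $K_m\subseteq\mathrm{int}(K_{m+1})$ (or use open pre-compact sets, as the paper does), since $\bigcup_m K_m=G$ alone does not suffice for this.
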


\begin{proof}

Let $\{F_i\}_{i=1}^\infty$ be a left-F\o lner sequence.  After perturbing slightly, we may assume each $F_i$ is pre-compact. Indeed, since $F_i$ has finite measure, there is a subset $L_i \subset F_i$ which is pre-compact such that $\lim_{i\to\infty}  \frac{\Haar_G(F_i\setminus L_i)}{\Haar_G(F_i)}=0$. Then $\{L_i\}_{i=1}^\infty$ is left-F\o lner. So we can replace $F_i$ with $L_i$. 

Let $\{K_i\}_{i=1}^\infty$ be an increasing sequence of pre-compact open subsets with $G = \cup_i K_i$. We choose $K_i$ to grow so slowly that
\begin{align}\label{Amenable1}
\lim_{i\to \infty} \frac{\Haar_G(K_iF_i)}{\Haar_G(F_i)}=1.
\end{align}
Let $F'_i= K_iF_i$. Then $F'_i$ is pre-compact and open. Because $\lim_{i\to \infty}\frac{\Haar_G(F'_i \vartriangle F_i)}{\Haar_G(F_i)}=0$,  $\{F'_i\}_{i=1}^\infty$ is left-F\o lner. 

For any compact $K \subset G$, there exists $i$ with $K \subset K_i$. Therefore,
\begin{eqnarray}
\liminf_{i\to \infty} \frac{\Haar_G(\{p \in F'_i:~  Kp \subset F'_i\})}{\Haar_G(F'_i)} &\ge& \liminf_{i\to \infty} \frac{\Haar_G(\{p \in F'_i:~ K_ip \subset F'_i\})}{\Haar_G(F'_i)}\nonumber \\ 
&\ge& \liminf_{i\to \infty} \frac{\Haar_G(F_i)}{\Haar_G(F'_i)} =1. \label{E:it}
\end{eqnarray}

Now let $\tF_i = (F'_i)^{-1}$. Note $\tF_i$ is pre-compact and open since $F'_i$ is. Let $K \subset G$ be compact. Suppose $p \in \tF_i$ satisfies $pK \subset \tF_i$. Then $K^{-1}p^{-1} \subset F'_i$. The converse is also true. Thus
$$\{p \in \tF_i:~  pK \subset \tF_i\}^{-1} = \{p \in F'_i:~  K^{-1}p \subset F'_i\}.$$
Because $G$ is unimodular, $\Haar_G(E)=\Haar_G(E^{-1})$ for any measurable $E \subset G$. Thus
\begin{eqnarray*}
\liminf_{i\to \infty} \frac{\Haar_G(\{p \in \tF_i:~  pK \subset \tF_i\})}{\Haar_G(\tF_i)} &=& \liminf_{i\to \infty} \frac{\Haar_G(\{p \in F'_i:~  K^{-1}p \subset F'_i\})}{\Haar_G(F'_i)} = 1
\end{eqnarray*}
by (\ref{E:it}).

\end{proof}

\begin{proof}[Proof of Theorem \ref{thm:amenable}]
Let $G$ be a unimodular amenable lcsc group. Let $\{F_i\}_{i=1}^{\infty}\subset G$ be a sequence as in Lemma \ref{L:Folner}. As in Example \ref{E:subset} we may regard $F_i$ as a local $G$-space with $\vol_{F_i}$ equal to the restriction of $\Haar_G$ to $F_i$ where (as always in this paper) $\Haar_G$ is a left-Haar measure.  Let  $U \subset G$ be pre-compact and $\eps>0$. Then $F_i[U]$ consists of all $p\in F_i$ with $pU \subset F_i$. By Lemma \ref{L:Folner}, 
$$\vol_{F_i}(F_i[U]) = \Haar_G(\{p \in F_i:~pU \subset F_i\}) \ge (1-\eps) \Haar_G(F_i)=(1-\eps) \vol_{F_i}(F_i)$$
for all sufficiently large $i$. Thus $\{F_i\}_{i=1}^{\infty}$ is a sofic approximation and $G$ is sofic.
\end{proof}

\subsection{The metric approach to soficity}

For this section, fix an lcsc group $G$ with a left-invariant proper metric $d_G$.
\begin{thm}\label{thm:injrad1}
Let $(M_i)_{i=1}^\infty$ be a sequence of local $G$-spaces.  Then the following are equivalent.
\begin{enumerate}
\item $\{M_i\}_{i=1}^\infty$ is a sofic approximation to $G$.
\item For every $\rho>0$
$$\lim_{i\to\infty} \frac{ \vol_{M_i}( \{p \in M_i:~ \injrad(M_i,p)>\rho \}) }{\vol_{M_i}(M_i)} = 1$$
\end{enumerate}
where injectivity radius is defined in Definition \ref{D:injrad1}.
\end{thm}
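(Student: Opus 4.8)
The plan is to reduce everything to a single elementary comparison between the sets $M[U]$ and the super-level sets of the injectivity radius, after which both implications follow by a sandwiching argument and a diagonalization. Throughout I use that $d_G$ is proper, so every open ball $B(\rho)$ is a pre-compact open neighborhood of $1_G$ and the balls $B(\rho)$ increase to $G$ as $\rho\to\infty$. The core observation is that the two conditions defining $M[U]$ (Definition \ref{D:sofic group}) read on $U=B(\rho)$ are literally the same as the two conditions defining $\injrad(M,p)$ (Definition \ref{D:injrad1}) at radius $\rho$. Writing $S_p=\{\rho''>0:\text{both conditions of Definition \ref{D:injrad1} hold for }B(\rho'')\}$, so that $\injrad(M,p)=\sup S_p$, I first note $S_p$ is downward closed: a homeomorphism restricts to a homeomorphism onto its (open) image on any open subball, and the identity $p.g.h=p.gh$ becomes easier to enforce on a smaller set. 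From this I extract, for any pre-compact open $U$ with $B(\rho)\subseteq U$,
$$p\in M[U]\ \Longrightarrow\ \injrad(M,p)\ge\rho,\qquad(\star)$$
using that the homeomorphic image of $B(\rho)$ is open and contains $p$ because $1_G\in B(\rho)$. Since $M[B(\rho)]$ is exactly $\{p:\rho\in S_p\}$, this yields the sandwich
$$\{p:\injrad(M,p)>\rho\}\ \subseteq\ M[B(\rho)]\ \subseteq\ \{p:\injrad(M,p)\ge\rho\}.\qquad(\star\star)$$
I would also remark that these sets are Borel, so the volumes in the statement are well defined.

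For the implication $(1)\Rightarrow(2)$, fix $\rho>0$ and choose $\rho'>\rho$. As $\overline{B(\rho')}$ is compact and the $U_i$ increase to $G$, we have $B(\rho')\subseteq U_i$ for all large $i$; then $(\star)$ gives $M_i[U_i]\subseteq\{p:\injrad(M_i,p)\ge\rho'\}\subseteq\{p:\injrad(M_i,p)>\rho\}$. Combined with the defining inequality $\vol_{M_i}(M_i[U_i])\ge(1-\epsilon_i)\vol_{M_i}(M_i)$ and $\epsilon_i\to0$, the limit in (2) follows at $\rho$.

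For $(2)\Rightarrow(1)$, I would diagonalize using only the first inclusion of $(\star\star)$. For each integer $n$, hypothesis (2) at $\rho=n$ produces an index $N_n$, which I take strictly increasing, with $\vol_{M_i}(\{p:\injrad(M_i,p)>n\})\ge(1-1/n)\vol_{M_i}(M_i)$ for all $i\ge N_n$. Setting $n(i)=\max\{n:N_n\le i\}$, so $n(i)\to\infty$, I define $U_i=B(n(i))$ and $\epsilon_i=1/n(i)$. Then the $U_i$ are pre-compact, open, and increase to $G$, and $\epsilon_i\to0$; moreover the first inclusion in $(\star\star)$ gives $\vol_{M_i}(M_i[U_i])\ge\vol_{M_i}(\{p:\injrad(M_i,p)>n(i)\})\ge(1-\epsilon_i)\vol_{M_i}(M_i)$, so each $M_i$ is a $(U_i,\epsilon_i)$-sofic approximation and $(M_i)_{i=1}^\infty$ is a sofic approximation.

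The only genuinely delicate step is $(\star)$/$(\star\star)$: one must verify that the multiplicativity condition and the homeomorphism condition are inherited when passing to a subball, the latter relying on $1_G\in B(\rho)$ to keep the image a neighborhood of $p$. Everything afterward is bookkeeping, and the discrepancy between ``$>\rho$'' and ``$\ge\rho$'' is harmless because in $(1)\Rightarrow(2)$ I insert an intermediate radius $\rho'>\rho$, while in $(2)\Rightarrow(1)$ only the strict first inclusion is used. One caveat worth flagging: the definition of a sofic approximation requires $\vol_{M_i}(M_i)<\infty$, so in direction $(2)\Rightarrow(1)$ this finiteness must be assumed — it is already implicit in (2), whose ratios presuppose finite positive total volume.
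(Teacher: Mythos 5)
Your proposal is correct and takes essentially the same route as the paper, whose proof simply observes that $\injrad(M_i,p)\ge\rho$ if and only if $p\in M_i[B(\rho)]$ and declares the rest immediate. Your version spells out the sandwich $(\star\star)$, the intermediate radius $\rho'>\rho$, and the diagonalization over $n$ that the paper leaves implicit, which is a legitimate (and more careful) elaboration of the same idea.
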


\begin{proof}
Let $U=B(\rho)$, which is the open ball of radius $\rho$ centered at the identity in $G$. Then $\injrad(M_i,p)\ge \rho$ if and only if $p\in M_i[U]$. So the theorem follows immediately from the definition of sofic approximation.
\end{proof}

\begin{remark}
It is possible to define Benjamini-Schramm (BS) convergence for measured metric spaces via the pointed Gromov-Hausdorff-Prokhorov topology \cite{MR3314481}. Theorem \ref{thm:injrad1} implies that if a sequence of local $G$-spaces equipped with metrics given by Theorem \ref{T:metric} is a sofic approximation to $G$ then it BS-converges to $G$. The converse is not true. For example, there are non-isomorphic finitely generated groups $G_1,G_2$ with Cayley graphs which are isomorphic as unlabeled graphs.  Precisely, there are generating sets $S_i$ for $G_i$ ($i=1,2$) so that with the corresponding word-metrics there is an isometry $\phi:G_1 \to G_2$ that maps the identity to the identity and $S_1$ bijectively onto $S_2$.

Suppose $\{M_i\}$ is a sofic approximation to $G_1$. Then $\{M_i\}$ BS-converges to $G_1$. However, we can think of $\{M_i\}$ as a local $G_2$-space in the following way. For $p \in M_i$ and $g \in S_2 \cup \{1_{G_2}\}$, let $p.g$ equal $p.\phi^{-1}(g)$ (if and only if the latter is defined). Because $G_2$ is finitely generated, this makes each $M_i$ into a local $G_2$-space. With this structure, $\{M_i\}$ is not a sofic approximation to $G_2$ (since $G_1$ and $G_2$ are non-isomorphic). But it does BS-converge to $G_2$ (since $G_1$ and $G_2$ are isometric).



\end{remark}


\subsubsection{Discrete sofic groups}

In this section we show that our new definition of sofic agrees with the standard definition if $G$ is a countable discrete group.

\begin{defn}\label{D:discretesofic}
Let $G$ be a countable group.  Let $\s:G \to \sym(V)$ (where $V$ is a finite set) be a set map. For $U \subset G$ let $V[\s,U] \subset V$ be the set of all $p$ such that 
\begin{enumerate}
\item $\s(g)\s(h)p = \s(gh)p$ for all $g,h \in U$;
\item $\s(g)p \ne \s(h)p$ for all $g,h \in U$ with $g \ne h$. 
\end{enumerate}
Then $\s$ is a  {\bf discrete $(U,\eps)$-sofic approximation} if
$$\#V[\s,U] \ge (1-\eps)\#V.$$
 $G$ is {\bf sofic as a discrete group} if for every finite $U \subset G$ and $\eps>0$ there exists a $(U,\eps)$-sofic approximation to $G$. 
\end{defn}

We first show that if $G$ is sofic as a discrete group then it admits a sofic approximation that is exact with respect to inverses and the identity.
\begin{lem}\label{L:trivial}
If $G$ is sofic as a discrete group then for every finite $U \subset G$ and $\eps>0$ there exists a discrete $(U,\eps)$-sofic approximation $\s:G \to \sym(V)$ such that $\s(1_G)$ is the identity and $\s(g^{-1})=\s(g)^{-1}$ for all $g\in G$. 
\end{lem}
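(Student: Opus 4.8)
The plan is to start from a high-quality ordinary discrete sofic approximation and then surgically repair the permutations so that the identity and inverse relations hold \emph{exactly}, while disturbing the approximation only on a negligible set. Concretely, set $S = \{1_G\} \cup U \cup U^{-1} \cup UU \cup (UU)^{-1}$, a finite symmetric set containing $1_G$, and let $\eps'>0$ be a small parameter to be fixed at the end. Using soficity of $G$ as a discrete group I would fix a discrete $(S,\eps')$-sofic approximation $\s : G \to \sym(V)$ and write $W = V[\s,S]$, so $\#W \ge (1-\eps')\#V$. The guiding observation is that the desired relations already hold on $W$: condition (1) of Definition \ref{D:discretesofic} with $g=h=1_G$ forces $\s(1_G)p = p$ for every $p \in W$ (cancel the bijection $\s(1_G)$), and the same condition applied to the pair $(g,g^{-1})$ gives $\s(g)\s(g^{-1})p = \s(1_G)p = p$, hence $\s(g^{-1})p = \s(g)^{-1}p$ for $p \in W$ and $g \in S$. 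Thus all repairs below alter each $\s(g)$ only off $W$.

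Next I would define the repaired map $\s'$. Put $\s'(g)=\mathrm{id}$ for $g=1_G$ and, more generally, for every $g\notin S$; this is consistent with both target relations since $S$ is symmetric. Choose a transversal $T$ of the pairs $\{g,g^{-1}\}$ with $g\ne g^{-1}$ inside $S$, and set $\s'(g)=\s(g)$ and $\s'(g^{-1})=\s(g)^{-1}$ for $g\in T$. By the observation above these agree with $\s$ on $W$, so no approximation quality is lost on $W$.

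The main obstacle is the self-inverse elements: an involution $g\in S$ with $g\ne 1_G$ and $g^2=1_G$ requires $\s'(g)$ to be a genuine involution of $V$, yet $\s(g)$ need not be one. Here I would use that on $W'_g := W\cap \s(g)^{-1}W$ the permutation $\s(g)$ is an honest involution: for $p\in W'_g$, condition (1) with $(g,g)$ gives $\s(g)^2p=\s(g^2)p=\s(1_G)p=p$, and since $\s(g)^2p=p\in W$ one checks $\s(g)$ maps $W'_g$ into $W'_g$, so the restriction of $\s(g)$ to $W'_g$ is an involution of $W'_g$. I would then define $\s'(g)$ to equal $\s(g)$ on $W'_g$ and the identity on $V\setminus W'_g$; this is an involution of $V$ agreeing with $\s(g)$ on $W'_g$. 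Since $\#(V\setminus W'_g)\le 2\#(V\setminus W)\le 2\eps'\#V$, only a small set is affected.

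Finally I would do the bookkeeping. Let $W^{*}=W\cap\bigcap_{g} W'_g$, the intersection over involutions $g\in S$, so that $\s'(g)p=\s(g)p$ for all $g\in S$ and $p\in W^{*}$, with $\#(V\setminus W^{*})\le (1+2\#S)\eps'\#V$. Setting $W^{**}=W^{*}\cap\bigcap_{h\in U}\s(h)^{-1}W^{*}$ guarantees that for $p\in W^{**}$ and $g,h\in U$ one has $\s(h)p\in W^{*}$ and $gh\in S$, whence $\s'(g)\s'(h)p=\s(g)\s(h)p=\s(gh)p=\s'(gh)p$ and, for $g\ne h$, $\s'(g)p=\s(g)p\ne\s(h)p=\s'(h)p$; that is, $W^{**}\subseteq V[\s',U]$. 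As $\#(V\setminus W^{**})\le (1+\#U)(1+2\#S)\eps'\#V$, choosing $\eps' < \eps/[(1+\#U)(1+2\#S)]$ yields $\#V[\s',U]\ge(1-\eps)\#V$. By construction $\s'(1_G)=\mathrm{id}$ and $\s'(g^{-1})=\s'(g)^{-1}$ for all $g\in G$ (checking the cases $g=1_G$, $g\in T\cup T^{-1}$, $g$ an involution, and $g\notin S$ separately), which completes the plan. The only delicate points are the involution surgery and keeping the several error terms simultaneously under control.
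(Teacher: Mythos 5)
Your proposal is correct and follows essentially the same strategy as the paper's proof: start from a sofic approximation over a symmetric enlargement of $U$ with a smaller error, observe that the identity and inverse relations already hold on the good set, repair $\s$ using a transversal of the $\{g,g^{-1}\}$ pairs together with a separate surgery for involutions, and finish with a union bound. Your treatment of the involution case via $W'_g = W \cap \s(g)^{-1}W$ is a slightly more explicit version of the paper's assertion that an order-two permutation agreeing with $\s(g)$ on the good set exists, and the bookkeeping matches the paper's intersection $\bigcap_{g\in U}\s(g)^{-1}V[\s,U^2]$.
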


\begin{proof}
Without loss of generality, we may assume $U=U^{-1}$ and $1_G \in U$. 

Let $\s:G \to \sym(V)$ be a discrete $(U^2,\eps|U|^{-1})$-sofic approximation to $G$. Let $D \subset G$ be the set of order 2 elements. Let $H \subset G$ be a subset such that for every $g \in G$ there is a unique element in the intersection $H \cap \{g,g^{-1}\}$. 

If $v \in V[\s,U^2]$ then $\s(1_G)^2 v = \s(1_G) v$. Thus $\s(1_G) v = v$. If also $g \in D \cap U^2$ is nontrivial then
\begin{eqnarray*}
\s(g)^2 v = \s(g^2)v = \s(1_G)v = v, \quad \s(g)v \ne \s(1_G)v = v.
\end{eqnarray*}
So there exists an element $\s'(g) \in \sym(V)$ with order 2 such that $\s'(g)v=\s(g)v$ for all $v\in V[\s,U^2]$. This defines $\s'(g)$ for all $g \in D \cap U^2$. Also define
$$\s'(g) = \left\{ \begin{array}{cc}
\textrm{identity} &  g \in \{1_G\} \cup (D \setminus U^2) \\
\s(g) & g \in H \setminus (D \cup \{1_G\})\\
\s(g^{-1})^{-1} & g^{-1} \in H \setminus (D \cup \{1_G\})\\
\end{array}\right.$$
This defines $\s'$ on all of $G$. Note $\s'(1_G)$ is the identity and $\s'(g)^{-1}=\s'(g^{-1})$ for all $g \in G$. Moreover, $\s'(g)v \in \{\s(g)v, \s(g^{-1})^{-1}v\}$ for all $v \in V[\s,U^2]$ and $g \in U^2$. 

It now suffices to show $\s'$ is a discrete $(U,\eps)$-sofic approximation. To prove this, let 
$$W = \{v \in V:~ \s(g)v \in V[\s,U^2] ~\forall g\in U\}.$$
We claim that $W \subset V[\s', U]$. 

To prove this we observe: if $v \in V[\s,U^2]$ and $g \in U^2$ then $\s'(g)v = \s(g)v = \s(g^{-1})^{-1}v$. Indeed 
$$\s(g^{-1})\s(g)v =\s(1_G)v = v.$$
Thus $\s(g)v = \s(g^{-1})^{-1}v$. Since $\s'(g)v \in \{\s(g)v, \s(g^{-1})^{-1}v\}$, it follows that $\s'(g)v=\s(g)v$. This proves the claim.

Now let $w \in W$ and $g,h \in U$. By definition of $W$, $\s(h)w  \in V[\s, U^2]$. Since $gh\in U^2$,
$$\s'(g)\s'(h)w = \s'(g)(\s(h)w) = \s(g)\s(h) w= \s(gh) w = \s'(gh)w.$$
Moreover, if $g \ne h$ then 
$$\s'(g)w = \s(g)w \ne \s(h)w = \s'(h)w.$$
This shows  $W \subset V[\s', U]$ as claimed.

By definition $W = \bigcap_{g \in U} \s(g)^{-1}V[\s, U^2]$. Since each $\s(g)$ is a permutation and $|V(\s,U^2)|\ge (1-\eps |U|^{-1})|V|$, this implies $|W| \ge (1-\eps)|V|$. Thus 
$$\#V[\s', U] \ge \# W \ge (1-\eps)\#V.$$
This shows $\s'$ is a discrete $(U,\eps)$-sofic approximation.

\end{proof}

\begin{thm}\label{T:discretegroup}
Let $G$ be a discrete countable group. Then $G$ is sofic as a discrete group if and only if $G$ is sofic in the sense of Definition \ref{D:sofic group}.
\end{thm}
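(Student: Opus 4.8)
The plan is to prove both directions by translating between permutation-based and measure-based sofic approximations. The key observation is that a discrete group $G$ has counting measure as its Haar measure (each point has mass $1$), so the two notions of soficity should match once we understand how a permutation-based approximation gives rise to a local $G$-space and vice versa. In both constructions the underlying ``space'' will be a finite set with counting measure, and a chart centered at a point $p$ will just identify a neighborhood of $p$ with a finite subset of $G$ containing the identity.

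\medskip
\noindent\textbf{Forward direction (discrete sofic $\Rightarrow$ sofic).} Suppose $G$ is sofic as a discrete group. Given a finite symmetric $U \ni 1_G$ and $\eps>0$, invoke Lemma \ref{L:trivial} to obtain a discrete $(U,\eps)$-sofic approximation $\s:G \to \sym(V)$ with $\s(1_G)=\mathrm{id}$ and $\s(g^{-1})=\s(g)^{-1}$. I would turn $V$ into a local $G$-space by defining a partial action: set $v.g = \s(g)v$ whenever $g$ lies in a suitable finite generating-type set, but more carefully, declare $(v,g)\in\dom(\a)$ and $v.g=\s(g)v$ exactly when certain consistency constraints hold near $v$ (so that the partial associativity Axiom~3 and the local-homeomorphism Axiom~4 are satisfied). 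Because $V$ is discrete and finite, the topology is discrete and Axiom~4 amounts to injectivity of $g\mapsto \s(g)v$ on $U$, which is precisely the second condition defining $V[\s,U]$. With counting measure as $\vol_V$, the set $V[\a,U]$ in the sense of Definition \ref{D:sofic group} will contain (essentially) $V[\s,U]$, so the measure estimate $\#V[\s,U]\ge(1-\eps)\#V$ gives the required $(U,\eps)$-sofic approximation. Letting $U$ increase to $G$ and $\eps\to 0$ produces a sofic approximation in the new sense.

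\medskip
\noindent\textbf{Reverse direction (sofic $\Rightarrow$ discrete sofic).} Conversely, suppose $G$ admits a sofic approximation $(M_i)$ in the sense of Definition \ref{D:sofic group}. Fix a finite $U\subset G$ and $\eps>0$, and choose $M=M_i$ that is a $(U',\eps')$-sofic approximation for $U'$ large enough to contain $U\cup U^2$ and $\eps'$ small. Since $G$ is discrete, Haar measure is counting measure, so $\vol_M$ is counting measure on a \emph{finite} set (here I use that $\vol_M(M)<\infty$ together with discreteness to see $M$ is finite). For $p\in M[U']$, the chart $f_p$ identifies the neighborhood $p.U'$ with $U'\subset G$, and the partial action restricted to these points behaves like the left-translation action of $G$ on itself. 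I would define a genuine permutation $\s(g)\in\sym(M)$ for each $g\in U$ by setting $\s(g)p = p.g$ wherever this is defined and consistent, and extending arbitrarily (e.g. by the identity, or by pairing up the leftover points) to obtain a bijection of the finite set $M$. The points of $M[U']$ then land in $V[\s,U]$: the partial-associativity from membership in $M[U']$ gives condition (1), and injectivity of the chart gives condition (2). Hence $\#V[\s,U]\ge \vol_M(M[U'])\ge(1-\eps')\vol_M(M)\ge(1-\eps)\#M$.

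\medskip
\noindent\textbf{Main obstacle.} The delicate point in both directions is that a partial action is genuinely \emph{partial}, whereas $\s(g)$ must be a global permutation of the finite set. In the reverse direction one must extend the partially-defined map $p\mapsto p.g$ to a bijection of all of $M$ without destroying the count of good points; since the partial map is injective (Lemma \ref{L:injective}) on its domain, one can complete it to a permutation, but care is needed to ensure the extension does not accidentally create agreements $\s(g)p=\s(h)p$ that would eject good points from $V[\s,U]$—this is handled by noting the good points lie in the image only via their own orbit. In the forward direction, the subtlety is verifying Axiom~3 for the constructed partial action: associativity can fail for the raw permutations off the good set, so the domain of $\a$ must be restricted precisely to the configurations where three-fold consistency holds, and one must check this restriction still leaves $V[\s,U]$ inside $V[\a,U]$. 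I expect the bulk of the work to be the careful bookkeeping of these domain restrictions and the bound $\#W\ge(1-\eps)\#V$ of the type already appearing in Lemma \ref{L:trivial}.
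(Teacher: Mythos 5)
Your outline matches the paper's strategy in both directions: the paper likewise invokes Lemma \ref{L:trivial} in the forward direction, and in the reverse direction extends the partial injections $\a(\cdot,g)$ (injective by Lemma \ref{L:injective}) to permutations of the finite set $M$. Your worry about the extension creating spurious coincidences $\s(g)p=\s(h)p$ is in fact harmless, for the reason you gesture at: on the good set the permutations agree with the genuinely defined partial action, whose injectivity on a large ball is guaranteed by the injectivity radius, so the arbitrary extension off the domain cannot eject good points. The reverse direction is essentially complete.

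The forward direction, however, has a genuine gap: everything hinges on the phrase ``declare $(v,g)\in\dom(\a)$ \ldots{} exactly when certain consistency constraints hold near $v$,'' and no such definition is supplied. This is not bookkeeping. The naive choice $\dom(\a)=\{(v,g):\ v\in V[\s,U],\ g\in U\}$ fails Axiom 2 of Definition \ref{D:partial}: one needs $(\a(v,g),g^{-1})\in\dom(\a)$, but $\a(v,g)$ need not itself be a good point. (You flag Axiom 3 as the subtlety but never mention Axiom 2, which is the first constraint that kills the obvious candidates.) The paper resolves this by fixing a proper left-invariant metric $d_G$, defining an injectivity radius $\injrad(\s,p)$ for each $p\in V$, and declaring $(p,g)\in\dom(\a)$ if and only if \emph{either} $\injrad(\s,p)>d_G(1_G,g)$ \emph{or} $\injrad(\s,\s(g)^{-1}p)>d_G(1_G,g)$; the symmetry between the source and the target is exactly what makes Axiom 2 hold. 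Axiom 3 then requires an argument rather than merely a restriction: the hypotheses only tell you that one endpoint of each of the three pairs has large injectivity radius, and the paper verifies $\s(h)^{-1}\s(g)^{-1}p=\s(gh)^{-1}p$ by selecting whichever of the four points $p$, $\s(g)^{-1}p$, $\s(h)^{-1}\s(g)^{-1}p$, $\s(gh)^{-1}p$ maximizes the injectivity radius and running the computation from that basepoint, using $\s(g^{-1})=\s(g)^{-1}$ from Lemma \ref{L:trivial}. Until a domain is written down for which Axioms 2 and 3 can actually be checked and for which $V[\s,B(2\rho)]\subset V[\a,B(\rho)]$ still holds, the forward direction remains a plan rather than a proof.
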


\begin{proof}
Suppose $G$ is sofic as a discrete group. Let $d_G$ be a proper left-invariant metric on $G$. Recall that $B(\rho)$ denotes the open radius $\rho$ ball centered at the identity in $G$. It suffices to show that for every radius $\rho>0$ and $\eps>0$, there is a $(B(\rho), \eps)$-sofic approximation to $G$ (in the sense of Definition \ref{D:sofic group}).

Let $\s: G \to \sym(V)$ be a discrete $(B(2\rho), \eps)$-sofic approximation to $G$. By Lemma \ref{L:trivial}, we may assume $\s(1_G)$ is the identity permutation and $\s(g^{-1})=\s(g)^{-1}$ for all $g\in G$. 

For $p \in V$, define $\injrad(\s,p)$ to be the supremum of $\eta>0$ such that
\begin{enumerate}
\item  $\s(gh)^{-1}p= \s(h)^{-1}\s(g)^{-1}p$ for all $g,h \in B(\eta)$;
\item $\s(g)^{-1}p \ne \s(h)^{-1}p$ if $g,h \in B(\eta)$ with $g \ne h$.  
\end{enumerate}
So $p \in V[\s,B(\rho)]$ if and only if $\injrad(\s,p) \ge \rho$.

Let $\dom(\a)$ be the set of all $(p,g)$ in  $V \times G$ such that either $\injrad(\s,p) > d_G(1_G,g)$ or $\injrad(\s,\s(g)^{-1}p)>d_G(1_G,g)$. Define $\a:\dom(\a) \to V$ by $\a(p,g)=\s(g)^{-1}p$. We claim that $(V,\a)$ is a local $G$-space. It is immediate that Axioms 1, 2 and 4 of Definition \ref{D:partial} hold. 

To verify Axiom 3, suppose $(p,g), (\a(p,g),h), (p,gh) \in \dom(\a)$. Then 
 $$\a( \a(p,g), h) = \s(h)^{-1} \a(p,g) = \s(h)^{-1}\s(g)^{-1}p$$
 $$\a(p,gh) = \s(gh)^{-1}p.$$
 So we must show 
 \begin{eqnarray}\label{E:goal}
 \s(h)^{-1}\s(g)^{-1}p = \s(gh)^{-1}p.
 \end{eqnarray}
 
Because $(p,g), (\a(p,g),h), (p,gh) \in \dom(\a)$,
\begin{eqnarray*}
\textrm{ either } \injrad(\s,p)>d_G(1_G,g) &\textrm{ or }& \injrad(\s,\s(g)^{-1}p)>d_G(1_G,g), \\ 
 \textrm{ either } \injrad(\s,\s(g)^{-1}p)>d_G(1_G,h) &\textrm{ or }& \injrad(\s,\s(h)^{-1}\s(g)^{-1}p)>d_G(1_G,h), \\ 
 \textrm{ either } \injrad(\s,p)>d_G(1_G,gh) &\textrm{ or }& \injrad(\s,\s(gh)^{-1}p)>d_G(1_G,gh). 
 \end{eqnarray*}
 
Choose $q \in \{p, \s(g)^{-1}p, \s(h)^{-1}\s(g)^{-1}p, \s(gh)^{-1}p\}$ to maximize the injectivity radius $\injrad(\s,q)$. Note $\injrad(\s,q) > d_G(1_G,f)$ for all $f \in \{g,h,gh\}$. 
 
 If $q=p$ then (\ref{E:goal}) follows by definition of $\injrad(\s,p)$.  If $q = \s(g)^{-1}p$ then 
 $$\s(gh)\s(h)^{-1}\s(g)^{-1}p = \s(g)\s(g)^{-1}p = p$$
  by definition of $\injrad(\s,\s(g)^{-1}p)$ and the assumption $\s(h)^{-1}=\s(h^{-1})$. This also implies (\ref{E:goal}) by multiplying both sides by $\s(gh)^{-1}$. The other cases are similar. This verifies Axiom 3. 
 

  
It is immediate that $V[\s,B(2\rho)] \subset V[\a,B(\rho)]$.  So $\#V[\a,B(\rho)] \ge \#V[\s,B(2\rho)] \ge (1-\eps)\#V$. This proves $(V,\a)$ is $(B(\rho),\eps)$-sofic in the sense of Definition \ref{D:sofic group}. Since $\rho,\eps$ are arbitrary, this proves $G$ is sofic.

Now suppose $G$ is sofic in the sense of Definition \ref{D:sofic group}. Let $\rho>0$ be a radius and $\eps>0$. It suffices to show there exists $\s:G \to V$ such that $\s$ is a discrete $(B(\rho),\eps)$-sofic approximation.

By Theorem \ref{thm:injrad1}, there exists a sofic approximation $M=(M,\a)$ to $G$ such that
\begin{eqnarray}\label{E:injrad}
\frac{ \vol_{M}( \{p \in M:~ \injrad(M,p)>3\rho \}) }{\vol_{M}(M)} >1-\eps|B(\rho)|^{-1}.
\end{eqnarray}

Because $G$ is discrete, we choose $\Haar_G$ to be counting measure on $G$. Therefore, $\vol_M$ is counting measure on $M$. In particular, $M$ is finite.

By Lemma \ref{L:injective}, for $g \in G$, the map $\a(\cdot, g)$ is injective on its domain. Therefore, there exists a permutation $\s(g^{-1}) \in \sym(M)$ that agrees with $\a(\cdot, g)$ on its domain. So there is a map $\s:G \to \sym(M)$ such that $\s(g)p = \a(p,g^{-1})$ for all $(p,g^{-1}) \in \dom(\a)$. We claim that $(M,\s)$ is a $(B(\rho),\eps)$-discrete sofic approximation.

Let $D$ be the set of all $p \in M$ such that $\injrad(M, p.g) >2\rho$ for all $g \in B(\rho)$. By (\ref{E:injrad}), $|D| \ge (1-\eps)|M|$. So it suffices to show that $D \subset M[\s,B(\rho)]$. 

Let $p \in D$ and $g,h \in B(\rho)$. By the triangle inequality, $gh \in B(2\rho)$. Since $\injrad(M, p) >2\rho$, $\s(gh)p = \s(g)\s(h)p$. Moreover, if $g \ne h$ then $\s(g)p \ne \s(h)p$. These two claims imply $D \subset M[\s,B(\rho)]$ and so completes the proof.

 \end{proof}

\subsection{Stability of soficity under constructions}

\subsubsection{Inducing from a subgroup} \label{seg.induce}


In this section we prove that if $G$ contains a sofic lattice $\G \le G$ then $G$ is sofic as well. Moreover, if $\Si=\{V_i\}_{i \in \N}$ is a sofic approximation to $\G$ then there is an {\bf induced sofic approximation} $\Ind_\G^G(\Si) = \{\Ind_\G^G(V_i)\}_{i \in \N}$ to $G$. This is similar to the way that an action or representation of $\G$ can be induced to $G$. It depends apriori on a choice of fundamental domain $\Delta \subset G$. We will choose $\Delta$ to have some additional properties that will make it easier to prove that the induced map really is a sofic approximation. It seems likely that different fundamental domains lead to essentially the same induced sofic approximation but we make no effort to prove it. The next lemma gives a `nice' fundamental domain.

\begin{defn}
A {\bf lattice} is a subgroup $\G\le G$ such that, with the induced topology, $\G$ is discrete and $\G \backslash G$ has a finite $G$-invariant Borel measure. A {\bf fundamental domain} for $\G$ is a Borel set $\Delta \subset G$ such that $\sqcup_{g\in \G} g\Delta$ is a partition of $G$. 
\end{defn}

\begin{defn}
Let $X$ be a topological space. A collection $\{Y_i\}_{i \in I}$ of subsets $Y_i \subset X$ is {\bf locally finite} if for every $x \in X$ there exists an open neighborhood $O$ of $x$ in $X$ such that 
$$\#\{i \in I:~ Y_i \cap O \ne \emptyset \} < \infty.$$
This condition implies that for every compact $K \subset X$,
$$\#\{i \in I:~ Y_i \cap K \ne \emptyset \} < \infty.$$
\end{defn}

\begin{lem}\label{L:fund-domain}
Let $\G \le G$ be a lattice.  Then there exists a fundamental domain $\Delta$ for $\G$ such that the collection $\{g\Delta:~g\in \G\}$ is locally finite.

\end{lem}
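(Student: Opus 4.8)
The plan is to realize $\Delta$ as a Borel transversal for the quotient map $\pi:G\to\G\backslash G$, built by lifting a locally finite cover of $\G\backslash G$ through the local homeomorphism $\pi$, and then to deduce local finiteness from the proper discontinuity of the $\G$-action together with local finiteness downstairs.

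First I would record two consequences of discreteness. Since $\G$ is discrete in the Hausdorff group $G$ it is closed, so for compact $K,L\subset G$ the set $\{\g\in\G:~\g K\cap L\ne\emptyset\}=\G\cap LK^{-1}$ is the intersection of a closed discrete set with a compact set and is therefore finite; that is, $\G$ acts properly discontinuously by left translation. Second, I would choose a symmetric precompact open neighborhood $V$ of $1_G$ with $VV^{-1}\cap\G=\{1_G\}$; then for every $y\in G$ the restriction $\pi|_{Vy}$ is injective (if $\pi(vy)=\pi(v'y)$ then $v'v^{-1}\in VV^{-1}\cap\G=\{1_G\}$), and being a continuous open injection it is a homeomorphism of $Vy$ onto the open set $\pi(Vy)$. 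Thus $\{\pi(Vy):~y\in G\}$ is an open cover of $\G\backslash G$ admitting continuous local sections $(\pi|_{Vy})^{-1}$.

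Next I would pass to the quotient. Because $\G$ is closed and discrete, $\G\backslash G$ is again lcsc, hence metrizable, paracompact, and $\sigma$-compact, so it admits a countable, locally finite, precompact open cover $\{W_n\}$ refining $\{\pi(Vy)\}$. For each $n$ fix $y_n$ with $W_n\subset\pi(Vy_n)$ and let $s_n:W_n\to Vy_n$ be the section $(\pi|_{Vy_n})^{-1}$; since $V$ is precompact, $\tilde W_n:=s_n(W_n)$ is precompact. I would disjointify downstairs by setting $C_n=W_n\setminus\bigcup_{m<n}W_m$, a Borel partition of $\G\backslash G$, and define $\Delta=\bigsqcup_n s_n(C_n)$. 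Then $\pi|_\Delta$ is a bijection onto $\G\backslash G$: surjective because the $C_n$ cover, and injective because the $C_n$ are disjoint and each $s_n$ is a section. This is exactly the statement that $\{\g\Delta:~\g\in\G\}$ partitions $G$, so $\Delta$ is a Borel fundamental domain.

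Finally comes the heart of the matter, local finiteness, which I expect to be the main obstacle since it is where the two finiteness inputs must be combined. Given $x\in G$ with $\bar x=\pi(x)$, local finiteness of $\{W_n\}$ yields a neighborhood $\bar O\ni\bar x$ meeting only finitely many $W_n$, say for $n\in F$; I would then choose a precompact open $O\ni x$ with $\pi(O)\subset\bar O$. If $\g\Delta\cap O\ne\emptyset$ then $\g\,s_n(C_n)\cap O\ne\emptyset$ for some $n$, and applying $\pi$ gives $C_n\cap\pi(O)\ne\emptyset$, whence $W_n\cap\bar O\ne\emptyset$ and $n\in F$. For each of these finitely many $n$, since $s_n(C_n)\subset\tilde W_n$ the relation $\g\tilde W_n\cap O\ne\emptyset$ forces $\g\in O\,\tilde W_n^{-1}$, which lies in the compact set $\bar O\,\overline{\tilde W_n}^{-1}$, so proper discontinuity leaves only finitely many such $\g$. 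Hence only finitely many $\g$ satisfy $\g\Delta\cap O\ne\emptyset$, proving that $\{\g\Delta:~\g\in\G\}$ is locally finite. The one technical point to check carefully is that $\{W_n\}$ can be taken countable, locally finite, and precompact simultaneously, which follows from paracompactness together with second countability and local compactness of $\G\backslash G$.
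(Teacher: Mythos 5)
Your proof is correct and follows essentially the same strategy as the paper's: lift a countable locally finite cover of $\G\backslash G$ through local sections of $\pi$, disjointify in a countable order to produce the Borel fundamental domain, and combine local finiteness downstairs with discreteness of $\G$ and pre-compactness of the lifted pieces to bound the number of translates meeting a small neighborhood. The only differences are cosmetic (you disjointify in the quotient and then lift, whereas the paper subtracts the $\G$-orbits of earlier pieces upstairs, and you phrase the finiteness input as proper discontinuity via $\G\cap LK^{-1}$ being finite); also note the minor notational collision where $\bar O$ denotes both the neighborhood in $\G\backslash G$ and, later, the closure of $O$ in $G$.
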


\begin{proof}
Let $\pi:G \to \G \backslash G$ be the quotient map. Because $\G$ is discrete, for every $g \in G$ there exists an open pre-compact neighborhood $\tO$ of $g$ such that the restriction of $\pi$ to $\tO$ is injective. So there exists an open cover $\{O_i\}_{i\in I}$ of $\G \backslash G$ such that for each $i\in I$, there is a pre-compact open set $\tO_i \subset G$ such that $\pi$ restricted to $\tO_i$ is a homeomorphism onto $O_i$. Because $G$ is locally compact, after passing to a sub-cover if necessary, we may assume $\{O_i\}_{i\in I}$  is locally finite. This implies that the cover $\{g\tO_i\}_{g\in \G, i \in I}$ of $G$ is also locally finite. In fact, because $\tO_i$ is pre-compact and $\G$ is discrete, for any compact $K \subset G$ there are only finitely many $g\in \G$ with $g\tO_i \cap K \ne \emptyset$. On the other hand, there are only finitely many indices $i\in I$ with $\G \tO_i \cap K \ne \emptyset$ because this condition implies $\G K \cap O_i \ne \emptyset$ and $\{O_i\}_{i\in I}$ is locally finite.

Since $G$ is second countable, the index set $I$ is at most countable, so we may assume $I\subset \N$. Define
$$\Delta = \bigcup_{i \in I} \tO_i \setminus (\cup_{g\in \G} \cup_{j<i} g\tO_j).$$
Then $\Delta$ is a Borel fundamental domain. 


Let $x\in G$. Let $\tO_x \subset G$ be a pre-compact open neighborhood of $x$ such that the restriction of $\pi$ to $\tO_x$ is a homeomorphism onto its image. Because $\{g\tO_i\}_{g\in \G, i \in I}$ of $G$ is locally finite, there are only finitely many pairs $(g,i) \in \G \times I$ such that $g\tO_i \cap \tO_x  \ne \emptyset$.  

We claim that $\tO_x$ intersects at most finitely many $\G$-translates of $\Delta$. To see this, let $g \in \G$ and suppose $g \Delta \cap \tO_x \ne \emptyset$. Since $\Delta \subset \cup_{i\in I} \tO_i$, this implies the existence of $i\in I$ with $g\tO_i \cap \tO_x \ne \emptyset$. So by the previous paragraph, there are only finitely many $g \in \G$ with $g \Delta \cap \tO_x \ne \emptyset$. This finishes the lemma. 
\end{proof}

\begin{thm}\label{T:soficlattice}
Let $\G \le G$ be a lattice where $G$ is an lcsc group. If $\G$ is sofic then $G$ is sofic. Moreover, for every sofic approximation $\Si=\{V_i\}_{i\in I}$ to $\G$ there is an induced sofic approximation $\Ind_\G^G(\Si)  = \{ \Ind_\G^G(V_i)\}_{i\in I}$ to $G$ determined only by $\Si$ and a choice of fundamental domain $\Delta$ for $\G$ satisfying Lemma \ref{L:fund-domain}.
\end{thm}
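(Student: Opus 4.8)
The plan is to build, for each local $\G$-space $V_i$, an induced local $G$-space $\Ind_\G^G(V_i)=V_i\times\Delta$ and to verify that the resulting sequence satisfies the injectivity-radius criterion of Theorem \ref{thm:injrad1}. Since $\G$ is discrete, each $V_i$ is a finite discrete space carrying a partial $\G$-action $v\mapsto v.\gamma$, and $\vol_{V_i}$ is counting measure. As a measure space, set $\Ind_\G^G(V):=V\times\Delta$, with $\vol_{\Ind}$ the product of counting measure on $V$ with the restriction of $\Haar_G$ to $\Delta$; in particular $\vol_{\Ind}(\Ind)=|V|\,\Haar_G(\Delta)<\infty$ because $\G\backslash G$ has finite volume. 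The partial $G$-action is the fundamental-domain return map: for $(v,\delta)\in V\times\Delta$ and $g\in G$, write $\delta g=\gamma\delta'$ with $\gamma\in\G$, $\delta'\in\Delta$ (unique since $\{a\Delta\}_{a\in\G}$ partitions $G$) and set $(v,\delta).g=(v.\gamma,\delta')$, declared defined precisely when $v.\gamma$ is defined. Equivalently this is the quotient $(V\times G)/\G$ under $(v,x)\sim(v.\gamma,\gamma^{-1}x)$, which supplies the topology: the images of the copies $\{v\}\times G$ are the charts. When $V=\G$ with its translation action one recovers $\Ind_\G^G(\G)=G$, so the construction generalizes Example \ref{ex:homog}.

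First I would verify that $(\Ind,\a)$ is a local $G$-space. Axioms 1--3 of Definition \ref{D:partial} are algebraic: the assignment $g\mapsto\gamma$ determined by $\delta g=\gamma\delta'$ is a cocycle, and uniqueness of the fundamental-domain decomposition reduces associativity $(v,\delta).g.h=(v,\delta).gh$ to the identity $v.\gamma_1.\gamma_2=v.\gamma_1\gamma_2$, which holds by Axiom 3 for $V$ whenever both sides are defined; Axioms 1 and 2 are checked similarly. For Axiom 4, Hausdorffness, and the lcsc property I would use that $\G$ is discrete (so each chart $\{v\}\times G\to\Ind$ is a local homeomorphism) together with the local finiteness of $\{a\Delta\}_{a\in\G}$ from Lemma \ref{L:fund-domain}, which guarantees that only finitely many copies are glued near any point; this makes $\Ind$ a space locally modelled on $G$. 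A short computation with Lemma \ref{L:mp} then confirms that $\vol_{\Ind}$ is the canonical measure.

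The heart of the argument is the injectivity-radius estimate. Fix $\rho>0$. For $(v,\delta)\in\Ind$ the developing map $g\mapsto\delta g$ sends $B(\rho)$ onto $\delta B(\rho)$, which (as $d_G$ is proper, so $B(\rho)$ is precompact) meets only finitely many tiles; let $W(\delta,\rho)=\{\gamma\in\G:~\gamma\Delta\cap\delta B(\rho)\ne\emptyset\}$, a finite set. Translating conditions (1) and (2) of Definition \ref{D:injrad1} through the developing map, I would show that $\injrad(\Ind,(v,\delta))>\rho$ holds as soon as $v\in V[W']$, where $W'=W(\delta,\rho)\cup W(\delta,\rho)^{-1}W(\delta,\rho)$: membership in $V[W']$ makes all the required $v.\gamma$ defined, mutually distinct, and compatibly associative, which is exactly what is needed for $g\mapsto(v,\delta).g$ to be an injective homeomorphism on $B(\rho)$ satisfying the partial-action identity. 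Thus the set of bad points is contained in $\{(v,\delta):~v\notin V[W']\}$.

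I expect the main obstacle to be that $\G$ need not be cocompact (e.g.\ $\mathrm{SL}(2,\Z)<\mathrm{SL}(2,\R)$), so $\Delta$ is not precompact and $W(\delta,\rho)$ is unbounded as $\delta$ runs into the cusps: there is no single finite $W'$ that works for all $\delta$. I would resolve this by a two-step exhaustion. Since $\Haar_G(\Delta)<\infty$ and each $W(\delta,\rho)$ is finite, for every $\eps'>0$ there is a finite $W_0\subseteq\G$ with $\Haar_G(\{\delta\in\Delta:~W(\delta,\rho)\not\subseteq W_0\})<\eps'\Haar_G(\Delta)$; let $W_0'$ be the finite set built from $W_0$ as above. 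Then
$$\vol_{\Ind}(\Ind\setminus\Ind[B(\rho)])\le \eps'\,|V|\,\Haar_G(\Delta)+|V\setminus V[W_0']|\cdot\Haar_G(\Delta),$$
where the first term bounds the points whose $\rho$-ball reaches beyond $W_0$ and the second bounds the points with $v$ defective. The first term is small by the choice of $W_0$; the second is at most $\eps_V|V|\Haar_G(\Delta)$ once $V=V_i$ is a good enough approximation to $\G$, so that $|V_i\setminus V_i[W_0']|\le\eps_V|V_i|$. Choosing radii $\rho_i\nearrow\infty$ growing slowly enough that the associated finite sets $W_0'$ are eventually contained in the approximating sets for $V_i$, and diagonalizing, makes $\{\Ind_\G^G(V_i)\}$ satisfy condition (2) of Theorem \ref{thm:injrad1}, hence a sofic approximation to $G$.
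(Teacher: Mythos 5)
Your construction is, up to the identification $\Delta\cong\G\backslash G$ given by the section, exactly the paper's: the induced space is $V_i\times\G\backslash G$ with the partial action $(v,\G h).g=(v.c(\G h,g),\G hg)$ for the cocycle $c(\G h,g)=\s(\G h)g\s(\G hg)^{-1}$, your $W(\delta,\rho)$ is the paper's $c(\G h,U)$, and your finite truncation $W_0$ handling the non-cocompact case is the paper's set $\Omega(F)$ with $\vol_{\G\backslash G}(\Omega(F))>(1-\eps/2)\vol_{\G\backslash G}(\G\backslash G)$. The injectivity-radius bookkeeping ($W'=W\cup W^{-1}W$ versus the paper's $F^2=F^{-1}F$, using the cocycle identity $c(\G hg_1,g_2)=c(\G h,g_1)^{-1}c(\G h,g_1g_2)$) and the final quantifier gymnastics also match; using Theorem \ref{thm:injrad1} instead of verifying $\vol(M[U])\ge(1-\eps)\vol(M)$ directly is only a cosmetic difference.

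The one place where you gloss over a genuine difficulty is precisely where the paper spends most of its effort: making $V_i\times\Delta$ into an honest local $G$-space. Two problems. First, declaring $(v,\delta).g$ defined ``precisely when $v.\gamma$ is defined'' does not give an open $\dom(\a)$ as required by Definition \ref{D:partial}: if $\delta g$ lies on the boundary between two tiles $\gamma\Delta$ and $\gamma''\Delta$, arbitrarily small perturbations of $g$ change the relevant group element from $\gamma$ to $\gamma''$, and $v.\gamma$ may be defined while $v.\gamma''$ is not; the action is then also discontinuous there. The paper fixes this by passing to the interior of the naive domain. Second, your ``equivalently this is the quotient $(V\times G)/\G$'' is not literally available: since the $\G$-action on $V_i$ is only partial, the relation $(v,x)\sim(v.\gamma,\gamma^{-1}x)$ need not be transitive (Axiom 3 only applies when all three relevant points are defined), so there is no quotient topology to pull back. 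One must instead declare a basis of sets $p.O$ for suitable $O$ and check directly that this is a basis, that the resulting charts are homeomorphisms, and that $\a$ is continuous with open image — this is the content of the paper's ``good sets'' and Claims 1--3. These are repairable technicalities and your injectivity-radius estimate only uses points well inside the good locus, so the argument goes through once this foundational layer is supplied; but as written the induced object is not yet a local $G$-space.
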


\begin{proof}
Let $\Delta$ be a fundamental domain for $\G$ satisfying Lemma \ref{L:fund-domain}.  Define a section $\s:\G \backslash G \to \Delta$  by  $\G \s(\G g) = \G g$. This is well-defined because $\Delta$ is a fundamental domain. Define $c:\G\backslash G \times G \to \G$ by
$$c(\G h, g) = \s(\G h) g \s(\G hg)^{-1}.$$ 
Then $c$ satisfies the cocycle equation $c(\G h, g) c(\G hg, k) = c(\G h, gk)$ for any $g,h,k$. 

If $K \subset G$ then we write $c(\G h, K) = \{ c(\G h, k):~k \in K\} \subset \G$. We claim that if $K$ is compact then $c(\G h, K)$ is finite. To see this, observe that $c(\G h, k) \Delta \cap \s(\G h) K \ne \emptyset$ (for all $k\in K$). In fact, 
$$\s(\G h)k = c(\G h, k) \s(\G hk) \in c(\G h, k) \Delta \cap \s(\G h) K.$$
  Because the collection $\{g \Delta\}_{g\in \G}$ is locally finite and $\s(\G h) K$ is compact, this implies the claim: $c(\G h, K)$ is finite.

Let $\Si=\{V_i\}_{i\in I}$ be a sofic approximation to $\G$ (in the sense of Definition \ref{D:sofic group}). We will denote the partial action of $\G$ on $V_i$ by $v.g$ for $v \in V_i, g\in \G$ whenever this is well-defined.

For a warm-up exercise, let's handle the special case in which there is a finite-index subgroup $H_i \le \G$, $V_i=H_i \backslash \G$ and the partial action of $\G$ on $V_i$ is the usual action by right-translation. In that case, $G$ acts on $V_i \times \G \backslash G$ by $(v, \G h).g = (v.c(\G h, g), \G hg)$. Define a $G$-equivariant Borel isomorphism 
$$\Phi: V_i \times \G \backslash G \to H_i \backslash G, \quad \Phi(H_i g, \G h) = H_i g \s(\G h).$$
The action of $G$ on $V_i \times \G \backslash G$ is not continuous with respect to the product topology. So we re-topologize $V_i \times \G \backslash G$ by pulling back the topology on $H_i \backslash G$. 

Because the quotient map $G \to H_i \backslash G$ is a covering space map, the topology on $H_i \backslash G$ is such that the open sets in $H_i \backslash G$ are images of open sets in $G$. So the new topology on $V_i \times \G \backslash G$ has a basis of open sets given by sets of the form $p.O$  where $p \in V_i \times \G \backslash G$ and $O \subset G$ is an open neighborhood of the identity. 


%

Now for the general case. For $(v, \G h)\in V_i \times \G \backslash G $ and $g \in G$ we write
$$\a'((v,\G h), g) = (v,\G h).g := (v.c(\G h, g), \G hg)$$
whenever this is well-defined. Observe that by Axiom 3 (applied to $V_i$), if $p \in V_i \times \G \backslash G$ and $g,h \in G$ are such that $p.g, p.g.h$ and $p.gh$ are all well-defined then $p.g.h = p.gh$.

We say that a subset $O \subset G$ is {\bf good} for a point $p \in V_i \times \G \backslash G$ if
\begin{itemize}
\item $O$ is an open neighborhood of the identity,
\item for every $h_1,h_2 \in O$, $p.h_1$ and $p.h_1.h_1^{-1}h_2$ are well-defined, and
\item the map which sends $g \in O$ to $p.g$ is injective.
\end{itemize}
Note that if $O$ is good for $p$, then every open subset of $O$ containing the identity is also good for $p$. Let $M_i \subset V_i \times \G \backslash G$ be the set of all points $p$ for which there exists a good set $O \subset G$.  If $p \in M_i$ and $O$ is good for $p$ then we write $p.O =\{p.g:~ g \in O\}$. 

\noindent {\bf Claim 1}. If $O$ is good for $p$ and $g \in O$ then $g^{-1}O$ is good for $p.g$. Moreover, $p.O = p.g.g^{-1}O$.
\begin{proof}[Proof of Claim 1]
Let $h_1,h_2 \in g^{-1}O$. We must show $p.g.h_1$ and $p.g.h_1.h_1^{-1}h_2$ are well-defined. 

Because $O$ is good for $p$ and $g, gh_1 \in O$, it follows that $p.g.h_1= p.g.g^{-1}gh_1$ is well-defined.  Moreover, $p.g$ and $p.gh_1$ are well-defined. Therefore, $p.gh_1 = p.g.h_1$. So it now suffices to show $p.gh_1.h_1^{-1}h_2$ is well-defined. But this follows from goodness of $O$ because $gh_1, gh_2 \in O$ and $p.gh_1.(gh_1)^{-1}gh_2 = p.gh_1.h_1^{-1}h_2$. This proves the first statement.

Note that if $O$ is good for $p$ and $g,h \in O$ then $p.g.g^{-1}h = p.h$ (because both sides are well-defined). The second statement follows.
\end{proof}
Claim 1 implies that $p.O \subset M_i$. 

\noindent {\bf Claim 2}.
The collection of subsets of $M_i$ of the form $p.O$ (where $p \in M_i$ and $O$ is good for $p$) is a base for a topology on $M_i$.
\begin{proof}[Proof of Claim 2]
 It suffices to show that if $O$ is good for $p$ and $U$ is good for $q$ and $r \in p.O \cap q.U$ then there is a good set $W \subset G$ for $r$ such that $r.W \subset p.O \cap q.U$. Let $r=p.g = q.h$ for some $g \in O$, $h\in U$. Then $g^{-1}O$ and $h^{-1}U$ are good for $r$ by Claim 1. It follows that $W=g^{-1}O \cap h^{-1}U$ is also good for $r$. Moreover, $r.W = p.g.(g^{-1}O \cap h^{-1}U) \subset p.O$. Similarly, $r.W \subset q.U$. 
\end{proof}
From now on, we consider $M_i$ with the topology induced by sets of the form $p.O$ as above. Claim 2 implies that  if $O$ is good for $p$ then the map $g \mapsto p.g$ from $O$ into $M_i$ is a homeomorphism onto an open subset of $M_i$. In particular, $M_i$ is locally compact. 

We claim $M_i$ is second countable.  Because $G$ is second countable, there is a countable base $\cB$ for the topology on $\G \backslash G$. Now suppose $p\in M_i$ and $U \subset G$ is good for $p$. Let $p=(v,\G h)$. Because $\cB$ is a base and $\G h U$ is open in $\G \backslash G$, there exists an open subset $U' \subset U$ with $1_G \in U'$ such that $\G h U' \in \cB$. Since $U'$ is good for $p$, $p.U'$ is open in $M_i$. Thus, the collection of sets of the form $p.U'$ is a countable base for the topology on $M_i$.




Let $\dom(\a)$ be the set of all $(p,g) \in M_i \times G$ such that there is an open neighborhood of $(p,g)$ in $M_i\times G$ on which $\a'$ is well-defined. Let $\a$ be the restriction of $\a'$ to $\dom(\a)$. 
 
 \noindent {\bf Claim 3}.
$\a$ is continuous. Moreover, if $(q,f)\in \dom(\a)$ then there is an open neighborhood $W$ of $(q,f)$ such that $\a(W)$ is open in $M_i$. 
\begin{proof}[Proof of Claim 3]
By Claim 2, it suffices to prove: if $p \in M_i$ and $O \subset G$ is good for $p$ then $\a^{-1}(p.O)$ is open in $M_i\times G$. So let $(q,f) \in \a^{-1}(p.O)$. 
Let $g \in O$ be such that $p.g=q.f$. By Claim 1, $g^{-1}O$ is good for $p.g=q.f$.


Let $U_1 \subset G$ be a set which is good for $q$ so that 
$$(q.U_1\times \{f\}) \cup (\{q\}\times U_1f) \subset \dom(\a)$$
and $f^{-1}U_1f \subset g^{-1}O$. We claim that $q.U_1f \subset p.O$. To see this, let $h \in U_1$. Then $q.hf = q.f.f^{-1}hf$ because both sides are well-defined (by Claim 1 applied to $q.f=p.g$). Since $q.f.f^{-1}hf=p.g.f^{-1}hf \subset p.O$, this proves $q.hf \in p.O$. Since $h$ is arbitrary,  $q.U_1f  \subset p.O$.

After choosing $U_1$ smaller if necessary, we may assume the closure of $q.U_1f$ is contained in $p.O$. Therefore, there is an open neighborhood $U_2\subset G$ of the identity such that $q.U_1f.U_2 \subset p.O$, 
$$(q.U_1\times fU_2) \cup (\{q\}\times U_1fU_2) \cup (q.U_1f \times U_2) \subset \dom(\a)$$
and $f^{-1}U_1f U_2 \subset g^{-1}O$. It follows that $W:=q.U_1\times fU_2 \subset M_i \times G$ is an open neighborhood of $(q,f)$. Moreover, $\a(W) = q.U_1.fU_2 = q.U_1fU_2 = q.U_1f.U_2$ since these are all well-defined. By assumption on $U_2$, this shows $\a(W) \subset p.O$. This shows $\a$ is continuous. 

Note $\a(W) = q.U_1fU_2 = q.f.f^{-1}U_1fU_2$ because both sides are well-defined. Because $f^{-1}U_1fU_2 \subset g^{-1}O$ which is good for $p.g=q.f$, it follows that $\a(W)$ is open in $M_i$.



\end{proof}








We claim the space $M_i$ with the partial action defined above is a local $G$-space. Axioms 1, 3 and 4 are immediate. To establish Axiom 2, suppose that $(p,g)\in \dom(\a)$. We have to show $(p.g,g^{-1})\in \dom(\a)$. There is an open subset $W \subset \dom(\a)$ containing $(p,g)$. For each $(q,f) \in W$, $q.f.f^{-1}$ is well-defined. Moreover, the set $\{(q.f, f^{-1}):~ (q,f) \in W\}$ is an open neighborhood of $(p.g, g^{-1})$. This implies Axiom 2.


Let $U \subset G$ be a precompact open neighborhood of the identity with $U=U^{-1}$ and $\eps>0$. We will show that if $i$ is sufficiently large then $M_i$ is a $(U,\eps)$-sofic approximation.

Given $F \subset \G$, let $\Omega(F)$ be the set of all $\G h \in \G \backslash G$ such that for every $g_1,g_2 \in U^3$ with $g_1g_2 \in U$, $c(\G h g_1, g_2) \in F$. Because $U$ is precompact, $c(\G h, U)$ is finite for every $h$. So there exists a finite set $F \subset \G$ such that 
$$\vol_{\G \backslash G}(\Omega(F)) > (1-\eps/2) \vol_{\G \backslash G}(\G \backslash G).$$
After choosing $F$ larger if necessary, we may assume $1_G \in F$ and $F=F^{-1}$. 

Because $\Si$ is a sofic approximation, there exists $I$ such that $i>I$ implies $V_i$ is an $(F^2,\eps/2)$-sofic approximation to $\G$. We claim that if $i>I$ then $M_i$ is a $(U,\eps)$-sofic approximation. Because 
\begin{eqnarray*}
\vol_{M_i}(V_i[F^2] \times \Omega(F)) &=& \vol_{V_i}(V_i[F^2]) \times \vol_{\G \backslash G}(\Omega(F)) \ge (1-\eps/2)^2|V_i|\vol_{\G \backslash G}(\G \backslash G)\\
 &\ge&  (1-\eps/2)^2\vol_{M_i}(M_i),
 \end{eqnarray*}
it suffices to show $M_i[U] \supset V_i[F^2] \times \Omega(F)$. 

First we claim that if $p=(v,\G h) \in  V_i[F^2] \times \Omega(F)$ then $p.U.U^2$ is well-defined. This is equivalent to the well-definedness of  $v.c(\G h, g_1).c(\G h g_1, g_2)$ for all $g_1 \in U$ and $g_2 \in U^2$. Because $\G h \in \Omega(F)$, $c(\G h, g_1)$ and $c(\G h, g_1g_2)$ are in $F$. By the cocycle equation, $c(\G h g_1, g_2) = c(\G h, g_1)^{-1}c(\G h, g_1g_2) \in F^2$. Since $v \in V_i[F^2]$, $v.c(\G h, g_1).c(\G h g_1, g_2)$ is well-defined. 

Next we show that $U$ is good for $p$. The first condition of `good' is trivial and the second condition holds by the paragraph above. To check the third condition, suppose $g_1, g_2 \in U$ and $p.g_1=p.g_2$. Because $c(\G h, g_i) \in F$ ($i=1,2$), and $v \in V_i[F^2]$, the condition $p.g_1=p.g_2$ implies $c(\G h, g_1) = c(\G h, g_2)$. Equivalently,
$$\s(\G h) g_1 \s(\G hg_1)^{-1} = \s(\G h) g_2 \s(\G hg_2)^{-1}.$$
Because we also have $\G h g_1=\G h g_2$, it follows that $g_1=g_2$. This verifies $U$ is good for $p$. In particular, $p\in M_i$.

Next we claim that for all $g\in U$, $(p,g) \in \dom(\a)$. It suffices to show $p.U \times gU$ is an open neighborhood of $(p,g)$ in $\dom(\a')$. This is implied by the paragraph above, which shows $p.U.gU$ is well-defined. So $V_i[F^2] \times \Omega(F) \times U \subset \dom(\a)$. 


Next we show that the map which sends $g\in U$ to $p.g$ is a homeomorphism onto an open subset of $M_i$. Because $U$ is good for $p$, this map is well-defined and injective. By claim 3, it is continuous.  Because $U$ is pre-compact, this implies the map is a homeomorphism onto an open subset. Thus $M_i[U] \supset V_i[F] \times \Omega(F)$ as claimed.

Because $U,\eps$ are arbitrary, this shows $\{M_i\}$ is a sofic approximation to $G$.




\end{proof}

\begin{cor}\label{C:lattices}
The following groups are sofic: semi-simple Lie groups (e.g. $\rm{SL}(n,\R), \rm{SO}(n,1)$ etc), the automorphism group of a regular tree. 
\end{cor}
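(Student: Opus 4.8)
The plan is to produce, in each of the groups $G$ listed, a lattice $\G \le G$ that is sofic as a discrete group; Theorem \ref{T:soficlattice} then immediately gives that $G$ itself is sofic. Since each such $\G$ is a countable discrete group, by Theorem \ref{T:discretegroup} its soficity as a discrete group coincides with soficity in the sense of Definition \ref{D:sofic group}, so there is no ambiguity. In every case I would deduce soficity of $\G$ from residual finiteness, using the following standard observation: if $\G$ is residually finite, enumerate $\G \setminus \{1_\G\}$ and build a descending chain of finite-index normal subgroups $N_i \trianglelefteq \G$ with $\bigcap_i N_i = \{1_\G\}$; the left-translation actions $\s_i : \G \to \sym(\G/N_i)$ are genuine homomorphisms, so the asymptotic multiplicativity holds exactly, and for $g \ne 1_\G$ we have $g \notin N_i$ for large $i$, whence $\s_i(g)$ is fixed-point-free. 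Thus $(\s_i)$ is a sofic approximation, and it suffices to exhibit residually finite lattices.

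For the semisimple Lie groups I would invoke the existence of lattices: explicitly, $\rm{SL}(n,\Z)$ is a lattice in $\rm{SL}(n,\R)$ and arithmetic subgroups furnish lattices in $\rm{SO}(n,1)$, while in general Borel's theorem guarantees that every connected semisimple Lie group contains a lattice. Any such lattice $\G$ is finitely generated (a standard fact about lattices in semisimple Lie groups) and, as a subgroup of a linear group, is linear; by Malcev's theorem a finitely generated linear group is residually finite. Hence $\G$ is sofic, and $G$ is sofic by Theorem \ref{T:soficlattice}.

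For $\Aut(T_d)$, the automorphism group of the $d$-regular tree with $d \ge 3$, I would choose a finite connected $d$-regular graph $X$ whose universal cover is $T_d$ and let $\G = \pi_1(X)$ act on $T_d = \widetilde{X}$ by deck transformations. This action is free, and the quotient graph $\G \backslash T_d = X$ is finite; freeness forces $\G$ to meet every (open, compact) vertex stabilizer trivially, so $\G$ is discrete in $\Aut(T_d)$, and finiteness of $X$ gives finite covolume, so $\G$ is a cocompact lattice. As the fundamental group of a finite graph, $\G$ is a finitely generated free group, hence residually finite and therefore sofic; again Theorem \ref{T:soficlattice} yields that $\Aut(T_d)$ is sofic.

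The main obstacle is external rather than internal to the paper: it is the input on existence and residual finiteness of the lattices. In the Lie-group case the genuinely deep ingredient is Borel's existence theorem for lattices in an arbitrary semisimple Lie group (for the named examples one can be completely explicit and bypass it), after which residual finiteness reduces cleanly to Malcev's theorem once finite generation is recorded. In the tree case the only points needing care are that the deck-transformation action is discrete and cocompact in $\Aut(T_d)$, both of which follow from freeness of the action together with finiteness of $X$.
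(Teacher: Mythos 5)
Your proposal is correct and follows essentially the same route as the paper, which simply notes that each listed group admits a residually finite lattice, that residual finiteness implies soficity, and then invokes Theorem \ref{T:soficlattice}; you have merely filled in the standard details (Malcev plus finite generation for the Lie-group lattices, free groups acting cocompactly on $T_d$, and the finite-quotient construction showing residually finite implies sofic).
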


\begin{proof}
These groups admit residually finite lattices. Since residual finiteness implies soficity, the corollary follows from Theorem \ref{T:soficlattice}.
\end{proof}

\subsubsection{Restricting to a subgroup}

It is well-known that if a countable group $\Ga$ is sofic then all of its subgroups are sofic. Indeed, given a sofic approximation $\{\s_i:\G \to \sym(V_i)\}$ one can restrict the maps $\s_i$ to a subgroup $\La$ to obtain a sofic approximation to $\La$. This argument fails in the general setting of locally compact groups because, if $H\le G$ then Haar measure on $H$ might be singular to Haar measure on $G$. However, the following gives a positive result. 

\begin{prop}\label{P:open}
Let $G$ be a locally compact sofic group. If $H \le G$ is an open subgroup then $H$ is sofic.
\end{prop}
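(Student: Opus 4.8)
The plan is to exploit the fact that, because $H$ is open, the entire theory of local $G$-spaces restricts cleanly to $H$. Concretely, given a local $G$-space $(M,\a)$, I would define a partial right $H$-action by restricting $\a$ to $\dom(\a)\cap(M\times H)$. Since $H$ is open in $G$, the set $M\times H$ is open in $M\times G$, so this restricted domain is open in $M\times H$, and Axioms 1--3 of Definition \ref{D:partial} are inherited verbatim. The only axiom requiring care is homogeneity: given $p\in M$, take the neighborhood $O_p$ of $1_G$ furnished by Axiom 4 of the $G$-structure; then $O_p\cap H$ is an open neighborhood of the identity in $H$ which is also open in $G$, so $\a(p,\cdot)$ carries it homeomorphically onto $p.(O_p\cap H)$, an open neighborhood of $p$ in $M$. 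Thus each $M$ is genuinely a local $H$-space. Note $H$ is lcsc, being an open (hence closed) subspace of $G$.

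Next I would verify that the canonical measure is unaffected by this change of structure. Fixing $\Haar_H=\Haar_G|_H$, which is a valid left-Haar measure on $H$ precisely because $H$ is open, every chart $f_p$ for the $H$-structure has range an open neighborhood of the identity in $H$; openness of $H$ makes this range also open in $G$, so $f_p$ is simultaneously a chart for the $G$-structure. Since $f_p(K)\subset H$, the defining formula gives $\Haar_H(f_p(K))=\Haar_G(f_p(K))$, so the $H$-canonical measure and the $G$-canonical measure agree on chart domains; by uniqueness of the canonical measure they coincide, and I may write $\vol_M$ unambiguously. This is exactly the step that fails for non-open $H$, where $\Haar_H$ may be singular to $\Haar_G$, as remarked before the statement.

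With the measures identified, the sofic count transfers directly. Let $\{M_i\}$ be a sofic approximation to $G$ with parameters $(U_i,\eps_i)$, $U_i\nearrow G$ and $\eps_i\to 0$. I claim that for any pre-compact open $W\subset H$ with $1_G\in W$ and $W\subset U_i$ one has $M_i[\a,U_i]\subset M_i[\a|_H,W]$. Indeed, for $g,h\in W$ with $gh\in W\subset U_i$ the equality $p.g.h=p.gh$ is inherited from membership in $M_i[\a,U_i]$, and restricting the homeomorphism $\a(p,\cdot)|_{U_i}$ to the open set $W$ gives a homeomorphism onto $p.W$, which is open and contains $p$ since $1_G\in W$. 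Hence $\vol_{M_i}(M_i[\a|_H,W])\ge\vol_{M_i}(M_i[\a,U_i])\ge(1-\eps_i)\vol_{M_i}(M_i)$, so $M_i$ is a $(W,\eps_i)$-sofic approximation to $H$.

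Finally I would assemble a bona fide sofic approximation to $H$. Fix an exhaustion $W_1\subset W_2\subset\cdots$ of $H$ by pre-compact open sets containing the identity with $\bigcup_j W_j=H$. For each $j$, since $\overline{W_j}$ is compact and $U_i\nearrow G$, there is an index $i(j)$ with $W_j\subset U_{i(j)}$ and $\eps_{i(j)}<1/j$; then $M_{i(j)}$, viewed as a local $H$-space, is a $(W_j,\eps_{i(j)})$-sofic approximation to $H$. As $W_j\nearrow H$ and $\eps_{i(j)}\to 0$, the sequence $\{M_{i(j)}\}_j$ is a sofic approximation to $H$, so $H$ is sofic. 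The only genuinely delicate points are the verification of Axiom 4 and the identification of the two canonical measures; both hinge on the openness of $H$, which guarantees that $H$-orbits locally fill $M$ and that $\Haar_H=\Haar_G|_H$.
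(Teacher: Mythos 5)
Your proposal is correct and follows essentially the same route as the paper: restrict the partial action to $\dom(\a)\cap(M\times H)$, use openness of $H$ to verify Axiom 4 and to identify $\Haar_H$ with $\Haar_G|_H$ (hence the two canonical measures), and transfer the sofic count. Your final assembly step — the monotonicity $M_i[\a,U_i]\subset M_i[\a|_H,W]$ for $W\subset U_i$ and the diagonal extraction over an exhaustion $W_j\nearrow H$ — is actually spelled out more carefully than in the paper, which leaves that bookkeeping implicit.
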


\begin{proof}
Let $(M,\a)$ be a local $G$-space. Define a partial action $\a_H$ by $\a_H: \dom(\a_H) \to M$, $\a_H(p,g)=\a(p,g)$ where $\dom(\a_H) = \dom(\a) \cap (M \times H)$. 

We claim that $(M,\a_H)$ is a local $H$-space. To see this, let $p \in M$. By Axiom 4 of Definition \ref{D:partial}, there is an open neighborhood $O_p$ of $1_G$ in $G$ such that the restriction of $\a(p,\cdot)$ to $O_p$ is a homeomorphism onto an open neighborhood of $p$ in $M$. Because $H$ is open, the restriction of $\a(p,\cdot)$ to $O_p \cap H$ is also a homeomorphism onto an open neighborhood of $p$ in $M$. This shows Axiom 4. The other Axioms are immediate. 


Let $U$ be an open neighborhood of $1_H$ in $H$ and $\eps>0$. Because $H$ is open in $G$, $U$ is also an open neighborhood of $1_G$ in $G$. By definition, $M[U,\a] = M[U,\a_H]$. Because Haar measure on $H$ equals Haar measure on $G$ restricted to $H$, the choice $\vol_M$ does not depend on whether we consider $M$ to be a local $G$-space or a local $H$-space. So  if $(M,\a)$ is a $(U,\eps)$-sofic approximation to $G$, then $(M,\a_H)$ is also a $(U,\eps)$-sofic approximation to $H$.

\end{proof}


\section{Open problems}

\subsection{Which groups are sofic?}

\begin{problem}
Are all unimodular lcsc groups sofic? For example, the Neretin group is a unimodular lcsc group without lattices \cite{MR2881324}. Is it sofic? 
\end{problem}

\begin{problem}
 If $G$ is linear and unimodular then is $G$ sofic? By Mal'cev's Theorem \cite{malcev-1940} if $G$ is finitely generated and linear then it is residually finite. Because increasing unions of sofic groups are sofic, if $G$ is countable and linear then it is sofic. 
\end{problem}

\begin{problem}
If $G$ is connected and sofic then is its universal cover sofic?
\end{problem}

\begin{problem}
Suppose $G$ is a connected unimodular Lie group and let $S \le G$ be its solvable radical. If $G/S$ is sofic then is $G$ sofic? 
\end{problem}

\begin{problem}
Permanence properties for discrete countable sofic groups have been studied in \cite{CHR14, MR3795485, MR3934790,MR4140873} for example. These papers concern graph products, wreath products (restricted and unrestricted) and semi-direct products respectively. Are there analogs of these results for locally compact sofic groups?

\end{problem}

\begin{problem}
Suppose $G$ is non-unimodular lcsc group and let $\d:G \to \R_{>0}$ denote the modular homomorphism. Let $\hG = \R \rtimes G$ denote the semi-direct product with group law
$$(t, g)(s,h) = (t+\d(g)s, gh).$$
Then $\hG$ and $\Ker(\d)$ are unimodular groups. If $\hG$ is sofic then is $\Ker(\d)$ sofic? If $\Ker(\d)$ is sofic then is $\hG$ sofic?
\end{problem}

\subsection{Group rings}

\begin{problem}
If $G$ is sofic then is its group von Neumann algebra Connes-embeddable? Elek and Szabo proved the answer is `yes' in the case of discrete countable groups \cite{elek-szabo-2005}. 
\end{problem}

\begin{problem}
The algebraic eigenvalue conjecture of J.Dodziuk, P.Linnell, V.Mathai, T.Schick and S.Yates \cite{MR1990479} posits that if $\G$ is a discrete group and $A \in M_n(\Z\G)$ (the ring of $n\times n$ matrices with values in the group ring $\Z\G$) and $\l(A)$ is the corresponding operator on $\ell^2\G^{\oplus n}$ then all eigenvalues of $\l(A)$ are algebraic integers. This was proven true for sofic groups by  A. Thom \cite{MR2417890}. Is there an analogous statement for locally compact sofic groups?
\end{problem}

\begin{problem}[Kaplansky's Direct Finiteness Conjecture]
A ring $R$ is said to be {\bf directly finite} if $xy=1$ implies $yx=1$ for all $x,y \in R$. Kaplansky conjectured that if $G$ is a countable group and $k$ is field then the group ring $kG$ is directly finite. This is known as Kaplansky's Direct Finiteness Conjecture. If $G$ is sofic then as explained in Problem \ref{P:Gottschalk} below, it satisfies Gottschalk's surjunctivity conjecture. This immediately implies $kG$ is directly finite if $k$ is a finite field. The general case follows because all fields are embeddable into ultraproducts of finite fields. See \cite{capraro-lupini} for details. Is there an analogous statement in the setting of locally compact groups?
\end{problem}

\subsection{Actions}

\begin{problem}[Gottschalk's Surjunctivity Conjecture]\label{P:Gottschalk}
Suppose $X$ is a compact Hausdorff space, $G$ is a topological group and $G \times X \to X$ is a jointly continuous action. This action is said to be {\bf surjunctive} if every continuous injective $G$-equivariant map $\phi:X \to X$ is surjective. 

Gottschalk conjectured that if $G$ is discrete and $A$ is a finite set then the full shift $G \cc A^G$ is surjunctive where $A^G=\{x:G\to A\}$ has the topology of pointwise convergence and $G$ acts on $A^G$ by
$$(gx)(f)=x(g^{-1}f).$$
M. Gromov proved that sofic groups satisfy Gottschalk's conjecture \cite{MR1694588}. His proof was simplified and made more accessible by B. Weiss \cite{weiss-2000}. It was re-proven by Kerr and Li using sofic topological entropy \cite{kerr-li-variational}. Is there an analog of Gottschalk's conjecture in the locally compact setting? 

\end{problem}

\begin{problem}
Sofic approximations have been used to define invariants of actions of discrete countable groups on probability spaces, compact topological spaces and Banach spaces. These invariants include sofic measure entropy \cite{bowen-jams-2010}, topological sofic entropy \cite{kerr-li-variational}, sofic mean dimension \cite{MR3077882}, sofic mean length \cite{MR3993930} and $\ell^p$ dimension \cite{hayes-lp-dimension-1}. These invariants generalize classical invariants of $\Z$-actions. For an introduction to sofic entropy, see \cite{bowen-icm}. This motivates the problem: generalize these invariants to actions by locally compact groups. This problem is open except for the fact that Sukhpreet Singh's thesis generalized some of the foundational results of sofic entropy theory to locally compact groups. He has no plans to publish his thesis, but copies are available upon request to the first author. \end{problem}

\subsection{Sofic approximations}

\begin{problem}[Amenable groups]
Elek and Szabo proved a structure theorem for sofic approximations of discrete amenable groups in \cite{MR2823074}. It states that there is essentially only one sofic approximation to a discrete amenable group (up to asymptotically vanishing perturbations and taking disjoint copies) which is given by a F\o lner sequence. Is there an analogous statement in the setting of locally compact groups?
\end{problem}

\begin{problem}[Flexible stability]
Let us say that a sofic group $G$ is {\bf flexibly stable} if for every $\d>0$ there are pre-compact open $U \subset G$ and $\eps>0$ such that if $M$ is an $(U,\eps)$-sofic approximation to $G$ then there exist lattice subgroups $\G_1,\ldots, \G_k \le G$, an open subspace $M' \subset M$, an open subspace $X' \subset X$ where $X:=\sqcup_{i=1}^k \G_i \backslash G$ is the disjoint union and a homeomorphism $\Phi:M' \to X'$ such that
\begin{itemize}
\item $\Phi(p.g)=\Phi(p)g$ whenever both sides are defined;
\item $\vol(M') \ge (1-\d)\vol(M)$;
\item $\vol(X') \ge (1-\d)\vol(X)$. 
\end{itemize}
This implies that any sofic approximation to $G$ can, by a small perturbation, be changed into an approximation by a disjoint union of coset spaces. It follows from Benjy Weiss's results in \cite{weiss-monotileable} and Elek and Szabo's structure theorem for sofic approximations of amenable groups \cite{MR2823074} that residually finite amenable discrete groups are flexibly stable. It is a folklore result that free groups are flexibly stable. In recent work, it has been shown that surface groups are flexibly stable \cite{2019arXiv190107182L}. This motivates the following questions: is PSL$(2,\R)$ flexibly stable? $\Aut(T_d)$? SO$(3,1)$? $\R^2 \rtimes \rm{SL}(2,\R)$? Are amenable unimodular groups that admit residually finite lattices flexibly stable? If $G$ is flexibly stable then are all lattice subgroups of $G$ flexibly stable?
\end{problem}

\begin{problem}[Property (T) and expanders]
If $M$ is a complete Riemannian manifold with finite volume then the {\bf Cheeger constant} of $M$ is
$$h(M) = \inf_{K \subset M} \frac{\textrm{area}(\partial K)}{\vol(K)}$$
where the infimum is over all compact smooth sub-manifolds $K \subset M$ with $0< \vol(K) \le \vol(M)/2$ \cite{MR0402831}.

Suppose $G$ is a connected Lie group with property (T). It is well-known that there is a positive lower bound $\eps_0>0$ on the Cheeger constants of coset spaces $\G \backslash G$. That is $h(\G \backslash G) \ge \eps_0$ for all lattices $\G \le G$. With this in mind, we conjecture that there exists $\eps'_0>0$ such that  for every $\d>0$ there exist a pre-compact open set $U \subset G$ and $\eps>0$ such that if $M$ is any $(U,\eps)$-sofic approximation to $G$ then there exists a smooth submanifold $M' \subset M$ satisfying 
\begin{itemize}
\item $\vol(M') \ge (1-\d)\vol(M)$;
\item every connected component of $M'$ has Cheeger constant $\ge \eps'_0$. 
\end{itemize}
A similar conjecture for discrete (T) groups by the first author was proven by Gabor Kun \cite{kun-2016}. Maybe there is a common generalization to all locally compact (T) groups?
\end{problem}

\begin{problem}
The {\bf sofic dimension} of a countable discrete group measures the growth rate of the number of sofic approximations to the group \cite{dykema-2014, MR3314839}.  It is a combinatorial version of the free entropy dimension \cite{MR1371236}. Moreover it admits a natural formula with respect to free product with amalgamation over an amenable group. Are there analogs of these results in the locally compact setting?
\end{problem}


\begin{problem}
A countable discrete group $G$ is sofic if and only if it embeds into a metric ultraproduct of finite symmetric groups \cite{elek-szabo-2005, pestov-sofic-survey}. Is there an analogous fact for locally compact sofic groups? Note that $\textrm{SL}(2,\R)$ is sofic (because it admits a residually finite lattice) but it does not continuously embed into a metric ultraproduct of compact groups. This is because $\textrm{SL}(2,\R)$ does not admit a proper bi-invariant metric.
\end{problem}

\subsection{Groupoids and measured equivalence relations}

\begin{problem}
Soficity was generalized to discrete measured equivalence relations and groupoids in \cite{MR2566316, paunescu-2011, dykema-2014}. Can this theory be generalized to measured equivalence relations and groupoids with locally compact leaves? \end{problem}

\begin{problem}
It might be possible to reduce soficity of a non-discrete lcsc group to soficity of a related discrete measured equivalence relation. The latter notion was introduced in \cite{MR2566316}. 

It is well-known that a discrete countable group $G$ is sofic if there exists an essentially free action of $G$ on a standard probability space $(X,\mu)$ such that the orbit-equivalence relation is sofic. It seems likely that this fact generalizes to locally compact groups as follows. 

Suppose $G$ acts on a standard probability space $(X,\mu)$ preserving the measure. For simplicity, let us assume the action is essentially free. By \cite{FHM78} there is a complete lacunary section $S \subset X$. This means that $S$ is Borel, $GS$ is conull in $X$ and there is an open neighborhood $U \subset G$ of the identity such that $Ux \cap S =\{x\}$ for all $x \in S$. In particular, if $\cR^X_G =\{(x,gx):~x\in X, g\in G\}$ is the orbit-equivalence relation of the $G$ action and $\cR^S_G:=\cR_G \cap (S\times S)$ then $\cR^S_G$ is an equivalence relation on $S$ with countable classes. 

If $G$ is non-discrete then $\mu(S)=0$. In spite of this, there is a natural measure, denoted $\nu$, on $S$ which behaves as if it were $\mu$ conditioned on $S$. The measure is defined by
$$\nu(A) = \frac{\mu(VA)}{\Haar_G(V)}$$
where $A \subset S$ is any Borel set and $V\subset G$ is a symmetric open neighborhood of $1_G$ in $G$ such that $V^2 \subset U$. This does not depend on the choice of $V$. This is explained in \cite{avni-2010} for example. 

We conjecture: if there exists a pmp action $G \cc (X,\mu)$ and a section $S \subset X$ as above such that the discrete measured equivalence relation $(S,\nu,\cR^S_G)$ is sofic then $G$ is sofic. This might give an approach to proving the group von Neumann algebras of sofic groups are Connes-embeddable (see \cite{MR2566316} for related results in the discrete case).

\end{problem}

\begin{problem}
The previous problem gave a sufficient condition for soficity. There is a related equivalent condition. It is well-known that a discrete countable group $G$ is sofic if and only if for every Bernoulli shift action of $G$ the associated measured equivalence relation is sofic. It seems likely that this fact generalizes to locally compact groups as follows.

Consider the Poisson point process on a non-discrete lcsc group $G$ with intensity measure equal to left-Haar measure on $G$. Because $G$ is non-discrete, we can consider the law of this process to be a $G$-invariant probability measure $\mu$ on the space $\Om$ of discrete closed subsets of $G$. Let $\Om_1 \subset \Om$ be the set of discrete closed subsets $\om \subset G$ with $1_G \in \om$. Even though $\mu(\Om_1)=0$, there is a natural probability measure $\nu$ on $\Om_1$ that intuitively represents $\mu$ conditioned on $\Om_1$ (this exists even though $\Om_1$ is not a lacunary section). Define an equivalence relation $\cR$ on $\Om_1$  by $(\om_1,\om_2) \in \cR \Leftrightarrow \exists g \in G$ such that $g\om_1=\om_2$. Then $\cR$ is discrete and $\nu$-preserving. We conjecture that $G$ is sofic if and only if this measured equivalence relation is sofic.
\end{problem}

\appendix

\bibliography{biblio}
\bibliographystyle{alpha}

\end{document}